\title{Einstein metrics and Killing spinors on pseudo-Riemannian solvmanifolds}
\author{Diego Conti, Federico A. Rossi and Romeo Segnan Dalmasso}
\newtheorem{theorem}{Theorem}[section]
\newtheorem*{theorem*}{Theorem}
\newtheorem{lemma}[theorem]{Lemma}
\newtheorem{corollary}[theorem]{Corollary}
\newtheorem*{corollary*}{Corollary}
\newtheorem{proposition}[theorem]{Proposition}
\theoremstyle{definition}
\theoremstyle{remark}
\newtheorem{remark}[theorem]{Remark}
\newcommand{\abs}[1]{\left\vert#1\right\vert}
\newcommand{\R}{\mathbb{R}}
\newcommand{\im}{\mathrm{Im}\,}         
\newcommand{\lie}[1]{\mathfrak{#1}}     
\newcommand{\g}{\lie{g}}
\newcommand{\Lie}{\mathcal{L}}          
\newcommand{\N}{\mathbb{N}}
\newcommand{\C}{\mathbb{C}}
\newcommand{\LieG}[1]{\mathrm{#1}}      
\newcommand{\Spin}{\mathrm{Spin}}
\newcommand{\SU}{\mathrm{SU}}
\newcommand{\Sp}{\mathrm{Sp}}
\newcommand{\SO}{\mathrm{SO}}
\newcommand{\Cl}{\mathrm{Cl}}
\newcommand{\CCl}{\mathrm{\C l}}
\newcommand{\Gtwo}{\mathrm{G}_2}
\newcommand{\id}{\operatorname{Id}}   
\newcommand{\Span}[1]{\operatorname{Span}\left\{#1\right\}}
\newcommand{\D}[1]{\frac{\partial}{\partial#1}}
\DeclareMathOperator{\ric}{ric} 
\DeclareMathOperator{\Ric}{Ric} 
\DeclareMathOperator{\Hom}{Hom}
\DeclareMathOperator{\diag}{diag}
\DeclareMathOperator{\ad}{ad}
\DeclareMathOperator{\Tr}{tr}
\DeclareMathOperator{\grad}{grad}
\newcolumntype{C}{>{$}c<{$}}
\newcolumntype{L}{>{$}l<{$}}
\newcolumntype{R}{>{$}r<{$}}
\begin{document}
\VerbatimFootnotes
\maketitle

\begin{abstract}
	Riemannian Einstein solvmanifolds can be described in terms of nilsolitons, namely nilpotent Lie groups endowed with a left-invariant Ricci soliton metric. This characterization does not extend to indefinite metrics; nonetheless, nilsolitons can be defined and used to construct Einstein solvmanifolds of a higher dimension in any signature.

	An  Einstein solvmanifold obtained by this construction turns out to satisfy the pseudo-Iwasawa condition, meaning that its Lie algebra splits as the orthogonal sum of a nilpotent ideal and an abelian subalgebra, the latter acting by symmetric derivations.
	
	In this paper we construct a family of pseudo-Iwasawa solvmanifolds admitting a Killing spinor in any dimension and signature and prove that all pseudo-Iwasawa solvmanifolds admitting a Killing spinor, invariant or not, belong to this family. If in addition the metric is Einstein, we show that the only possibility is the hyperbolic  half-space. As a byproduct, we prove that the only homogeneous Riemannian manifold admitting a Killing spinor with imaginary Killing constant is hyperbolic space.
\end{abstract}

\renewcommand{\thefootnote}{\fnsymbol{footnote}}
\footnotetext{\emph{MSC class 2020}: \emph{Primary} 53C25; \emph{Secondary} 53C50, 53C27, 22E25, 53C30.
}
\footnotetext{\emph{Keywords}: Killing spinor, Einstein metric,  pseudo-Riemannian metric, solvmanifold}
\renewcommand{\thefootnote}{\arabic{footnote}}

\section*{Introduction}

A Killing spinor on an $n$-dimensional pseudo-Riemannian spin manifold $(M,g)$ is a non-zero section $\psi$ of the spinor bundle $\Sigma M$ such that for some $\lambda\in\C$
\begin{equation}\label{eqn:Killingeqn}
	\nabla_X\psi=\lambda X\cdot\psi
\end{equation}
for any $X\in\Gamma(TM)$, where $\nabla$ is the connection on the spinor bundle induced by the Levi-Civita connection of $g$ and the dot denotes Clifford multiplication. As a matter of convention, we will exclude the degenerate case in which $\lambda=0$ and the spinor is parallel. We consider pseudo-Riemannian metrics of signature $(p,q)$; our study is mostly aimed at indefinite metrics, namely those for which $pq\neq0$, but also includes the Riemannian (or definite) case.

Killing spinors are of interest both in physics and mathematics. In physics they have been studied in relation to general relativity since~\cite{Walker1970OnSpacetimes} and later on in supergravity (see~\cite{Duff1986Kaluza-KleinSupergravity}). On compact Riemannian manifolds, Killing spinors correspond to eigenvectors for the Dirac operator which realize the lowest possible eigenvalue (see~\cite{friedrich1980}).

The existence of a Killing spinor $\psi$ puts strong constraints on the curvature; indeed, the Ricci operator satisfies (see \cite[Theorem~8]{BaumFriedrichGrunewaldKath})
\begin{equation}\label{eqn:RicciPsi}
	\Ric(X)\cdot\psi=4(n-1)\lambda^2X\cdot\psi.
\end{equation}
This forces  $\Ric-4(n-1)\lambda^2\id$ to be  a section of $\Hom(TM,V_\psi)$, with $V_\psi$ denoting the  distribution that annihilates $\psi$; the Clifford identity implies that elements of $V_\psi$ are isotropic, hence zero in definite signature. Thus, for Riemannian metrics~\eqref{eqn:RicciPsi} implies that $\Ric$ is a multiple of the identity, i.e. the metric is Einstein. An example of a non-Einstein Lorentz manifold endowed with a Killing spinor appears in~\cite{Bohle2003KillingManifolds}. Regardless of the signature, equation~\eqref{eqn:RicciPsi} implies that the scalar curvature satisfies
\begin{equation}
	\label{eqn:scalarifKilling}
	s=4n(n-1)\lambda^2.
\end{equation}
In particular, $\lambda$ is real or purely imaginary according to whether $s$ is positive or negative; by the same token, parallel spinors give $s=0$.

Killing spinors are also tied to the special holonomy groups $\SU(n)$, $\Sp(n)$, $\Gtwo$ and $\Spin (7)$. In the Riemannian case, a complete manifold $(M,g)$ with a Killing spinor for which $\lambda$ is imaginary is either hyperbolic space or a warped product of the interval with a special holonomy manifold of dimension $n-1$ (\cite{Baum_imaginary}). If $\lambda$ is real, the cone over $(M,g)$ is an $n+1$-dimensional special holonomy manifold, and the metric $g$ is either a round sphere, Einstein-Sasaki, nearly-K\"ahler, or nearly parallel $\Gtwo$ (\cite{Bar1993RealHolonomy}). Underlying these results is the characterization of special holonomy metrics in terms of parallel spinors established in~\cite{wangParallelSpinors}.
For Lorentzian signature, the list of holonomy groups of metrics admitting parallel spinors has been obtained in~\cite{Leistner:Classification}; a description of the Lorentzian geometries determined by a Killing spinor, entailing both analogies and complications when compared to the Riemannian case, appears in~\cite{baumLorentzian2000,baum2008codazzi,Bohle2003KillingManifolds}. In general signature, a full characterization seems more difficult to obtain due to the existence of indecomposable, not irreducible holonomy groups; however,  the irreducible holonomy groups corresponding to parallel spinors are indeed ``special'', in the sense that they have the same complexification as $\SU(n)$, $\Sp(n)$, $\Gtwo$ and $\Spin (7)$ (\cite{Baum_Kath_1999}).

This paper is focused on homogeneous metrics: we assume that the group of isometries acts transitively and a Killing spinor exists, not necessarily invariant under the group.
The Riemannian case is largely understood. As mentioned above, there are four possible geometries associated to a Killing spinor with $\lambda$ real (\cite{Bar1993RealHolonomy}); each of them is classified in the homogeneous case, leading to the following possibilities: round spheres, Einstein-Sasaki manifolds arising as
$\LieG{U}(1)$ bundles over generalized flag manifolds (\cite{Boyer_Galicki_2000}), nearly-K\"ahler symmetric spaces (\cite{Butruille_2005}), and the homogeneous nearly parallel $\Gtwo$ manifolds classified in~\cite{Friedrich_Kath_Moroianu_Semmelmann_1997}.
For $\lambda$ imaginary, a similar classification appears to be missing: taking~\cite{Baum_imaginary} into account, since homogeneous Riemannian manifolds are complete, the problem reduces to classifying homogeneous warped products admitting a Killing spinor with $\lambda$ imaginary. Such  manifolds are in particular Einstein of negative scalar curvature, as well as homogeneous, and therefore solvmanifolds by~\cite{Bohm_Lafuente_2023},  namely solvable Lie groups with a left-invariant metric. In general, the study of Riemannian Einstein solvmanifolds reduces to the study of  nilpotent Lie groups endowed with a Ricci soliton invariant metric, or nilsolitons; indeed, it follows from~\cite{Lauret_2010} and~\cite{Heber_1998} that every Einstein solvmanifold is, up to isometry, an orthogonal semidirect product of a nilsoliton and an abelian factor. There are many Riemannian Einstein metrics that can be obtained in this way (see \cite{Will_2003,FernandezCulma_2014,Conti_Rossi_2022,Arroyo:Filiform,KadiogluPayne:Computational} for partial classification results in low dimensions). This leaves the question of classifying those that admit a Killing spinor.

A first result in this paper is that only hyperbolic space occurs: the only homogeneous Riemannian  manifold that admits a Killing spinor with $\lambda$ imaginary is hyperbolic space. We prove this in two distinct ways. The first proof (Section~\ref{sec:homogeneous}) uses standard tools of Riemannian geometry and~\cite{Baum_imaginary}; the second proof (Section~\ref{sec:killingNotIwasawa}) is based on \cite{Bohm_Lafuente_2023,Lauret_2010,Heber_1998} and a more general study of pseudo-Riemannian Einstein solvmanifolds, which is the core of our paper.

Compared to the Riemannian situation, the pseudo-Riemannian setting is both more complicated and richer. Homogeneous indefinite metrics need not be complete (see \cite[§11]{Wolf_1964}).
In particular, the solvmanifold associated to hyperbolic space is the half-space model (see the first part of Section~\ref{sec:construction}), which is not complete: hyperbolic space of indefinite signature $(p,q)$ is diffeomorphic to $\R^{p}\times S^q$ (see \cite[Lemma~2.4.6]{Wolf:SpacesOfConstantCurvature}), and the half-space model can be identified with a proper open subset, as illustrated in~\cite{SeppiTrebeschi}. Beside hyperbolic half-space, Einstein solvmanifolds admitting a Killing spinor include other metrics with nonconstant curvature (see~\cite{Conti_Rossi_Segnan_2023}).
Moreover, indefinite Einstein solvmanifolds do not exhibit the rigid structure of their definite counterparts.  We will say that a  solvmanifold is pseudo-Iwasawa if it is the orthogonal semidirect  product of a nilpotent ideal and an abelian factor acting by symmetric derivations; if a pseudo-Iwasawa solvmanifold is Einstein, the nilpotent ideal is a nilsoliton (\cite{ContiRossi_IndeNilsEinsSov}). In contrast to the Riemannian case, not all indefinite homogeneous Einstein manifolds of negative scalar curvature fit into the pseudo-Iwasawa regime, even if one restricts to solvmanifolds (see \cite{ContiRossi_IndeNilsEinsSov,Rossi_2025,Conti_Rossi_SegnanDalmasso_2024}).
Thus, nilsolitons only form a part of the picture, although an important one constructively, since the Riemannian theory can be adapted to produce plenty of pseudo-Iwasawa Einstein solvmanifolds (\cite{Conti_Rossi_2022}).

It is natural to ask how these two layers of flexibility compare, namely whether it is possible to extend an indefinite nilsoliton to construct an Einstein solvmanifold that admits a Killing spinor, beyond the hyperbolic half-space. A partial indication to the contrary was given in~\cite{Conti_Rossi_Segnan_korean}, where we proved that Einstein-Sasaki solvmanifolds are not pseudo-Iwasawa. 

In this paper we generalize this result and classify all pseudo-Iwasawa solvmanifolds admitting Killing spinors. In Section~\ref{sec:construction} we construct a family of connected, simply connected Lie groups $\tilde G^{a,k}$ whose Lie algebra takes the form $\R^n\rtimes_{a\id+f}\R$ where $a\in\R$ and $f$ is a symmetric, nilpotent endomorphism of $\R^n$ with rank $k\geq0$. We show that the Lie group $\tilde G^{a,k}$ endowed with the left-invariant metric
\begin{equation*}\label{eqn:metricaGeneralHyper}
	e^{-2at}\biggl(\sum_{i=1}^k \delta_i(dx^i-tdz^i)\odot dz^i + \sum_{j=1}^{n-2k} \epsilon_j dy^j\otimes dy^j\biggr)+\epsilon_0dt^2,
\end{equation*}
admits one or more Killing spinors when $a\ne0$; we determine the precise number in Proposition~\ref{prop:counterexamplestomaintheorem}. When $a=0$ we obtain parallel spinors; in the particular case of Lorentzian signature, these metrics belong to the class of Brinkmann waves (see Remark~\ref{rmk:brinkmann}). In Section~\ref{sec:killingNotIwasawa} we prove that all pseudo-Iwasawa solvmanifolds admitting a Killing spinor belong to this family, and are Einstein only for $k=0$, corresponding to hyperbolic half-space (Theorem~\ref{thm:main}).

Applying this last result to the Riemannian case, we obtain our second proof of the fact that the only homogeneous Riemannian manifold carrying a Killing spinor with $\lambda$ imaginary is hyperbolic space.

In the indefinite case, the theorem shows that ad hoc constructions circumventing the pseudo-Iwasawa condition such as those of~\cite{Conti_Rossi_Segnan_2023} are indeed necessary in order to obtain new Einstein solvmanifolds admitting a Killing spinor.

\subsection*{Acknowledgments}
The authors are partially supported by the PRIN project n. 2022MWPMAB ``Interactions between Geometric Structures and Function Theories''. The authors also acknowledge Gruppo Nazionale per le Strutture Algebriche, Geometriche e le loro Applicazioni (GNSAGA) of Istituto Nazionale di Alta Matematica (INdAM).

\section{Homogeneous Riemannian metrics admitting Killing spinors}
\label{sec:homogeneous}
In this section we recall the construction of the bundle of spinors on a pseudo-Riemannian manifold, following~\cite{Lawson_Michelsohn_1989}, and some properties that will be needed in the proof of our theorem.  Then, we show that the only homogeneous Riemannian manifold carrying a Killing spinor with $\lambda$ imaginary is hyperbolic space.

Let $e_1,\dotsc, e_n$ be the standard basis of $\R^n$, let $e^1,\dotsc, e^n$ be the dual basis, and fix a scalar product
\begin{equation*}
	\label{eqn:generaldiagonalmetric}
	g=\epsilon_1e^1\otimes e^1+\dotsb  + \epsilon_ne^n\otimes e^n,
\end{equation*}
where $\epsilon_i=\pm1$. The metric induces musical isomorphisms $\flat\colon \R^n\to(\R^n)^*$, $\sharp\colon(\R^n)^*\to \R^n$, one the inverse of the other, defined by
\[g(v,w)=v^\flat(w), \qquad g(\alpha^\sharp, w)=\alpha(w).\]
If the metric has signature $(p,q)$, we will write $\R^{p,q}$ instead of $\R^n$ and denote by $\SO(p,q)$ the group of orientation preserving isomorphisms of $\R^n$ that preserve $g$. For any linear map $f\colon\R^{p,q}\to\R^{p,q}$, we will denote by $f^*$ its metric transpose and by $f^s$ its symmetric part, namely
\[f^s=\frac12(f+f^*).\]

Let $\Cl(p,q)$ denote the real Clifford algebra of $\R^{p,q}$, with the sign convention that $v\cdot w+w\cdot v=-2g(v,w)$. By definition, the spinor representation $\Sigma$ is an irreducible complex representation of $\Cl(p,q)$ (see e.g. \cite[§II.4]{Lawson_Michelsohn_1989}). We can identify $\R^{p,q}$ with a vector subspace of $\Cl(p,q)$, so that its action on $\Sigma$ restricts to Clifford multiplication, i.e. a bilinear map
\begin{equation*}
	\label{eqn:clifford}
	\R^{p,q}\otimes\Sigma\to\Sigma, \qquad (v,\psi)\mapsto v\cdot\psi.
\end{equation*}
The sign of this map depends on the orientation; we will always work in a fixed orthonormal basis, which will be assumed to be positively oriented. The group $\Spin(p,q)$ is the subgroup of $\Cl(p,q)$ generated by products of two unit vectors in $\R^{p,q}$; we denote by $\xi\colon \Spin(p,q)\to \SO(p,q)$ the double covering map. The Lie algebra of $\Spin(p,q)$ is generated by elements $e_i\cdot e_j$, acting on $\R^{p,q}$ as
\begin{equation}
	\label{eqn:xistare}
	\xi_{*e}(e_i\cdot e_j)v = 2\bigl(g(e_i,v)e_j-g(e_j,v)e_i\bigr)
\end{equation}
(see \cite[Proposition~I.6.2]{Lawson_Michelsohn_1989}).

Let $(M,g)$ be a pseudo-Riemannian oriented manifold of signature $(p,q)$. Let $P_{\SO}$ be the bundle of oriented orthonormal frames. A spin structure on $TM$  is a principal $\Spin(p,q)$-bundle $P_\Spin\to M$ with a $2:1$ bundle map $\tilde\xi$,
\[\xymatrix{  P_\Spin\ar[dr]\ar[rr]^{\tilde\xi} && P_\SO\ar[dl] \\ & M }\]
such that  $\tilde\xi(uh)=\tilde\xi(u)\xi(h)$ for all $h$ in $\Spin(p,q)$; the same construction can be applied to more general vector bundles than $TM$ (see \cite[§II.1]{Lawson_Michelsohn_1989}). A spinor is a section of the associated bundle $\Sigma M=P_\Spin\times_{\Spin(p,q)}\Sigma$.

The Levi-Civita connection on $P_\SO$ induces a connection on $P_\Spin$. Given a local section of $P_\Spin$, composing with $\tilde\xi$ we obtain a local section of $P_\SO$. This gives local trivializations $\{u_h\}$ and $\{e_j\}$ of the spinor and tangent bundles, implying that any spinor locally takes the form $\sum_h a_h u_h$, where each $a_h$ is a $\C$-valued smooth function. The Levi-Civita connection form $\theta$ is defined by $\nabla e_j=\sum_k\theta_{kj}\otimes e_k$, and as a consequence of~\eqref{eqn:xistare} the connection on a spinor $\psi=\sum_h a_hu_h$ is given by
\begin{equation}\label{eqn:ConnectionOnSpinorBasis}
	\nabla\psi =\sum_h\biggl( da_h\otimes u_h+ \frac12 \sum_{k<j}a_h\theta_{kj}\otimes (e^j)^\sharp \cdot e_k\cdot u_h\biggr);
\end{equation}
see \cite[Theorem~II.4.14]{Lawson_Michelsohn_1989} for the  Riemannian case and \cite[Eq.~(4)]{BarGauduchonMoroianu} for general signature. In proofs, we will avoid referencing the connection form and use the equivalent formula
\begin{equation}\label{eqn:ConnectionOnSpinorNabla}
	\nabla_{X}\psi =\sum_h (\Lie_{X}a_h) u_h + \frac14\sum_{h,j}a_h (e^j)^\sharp \cdot (\nabla_X e_j)\cdot u_h.
\end{equation}
Similarly, the curvature tensor $R$ acts on spinors as
\begin{equation}
	\label{eqn:RXYpsi}
	R(X,Y)\psi=\frac14\sum_{j}(e^j)^\sharp\cdot R(X,Y)e_j\cdot\psi;
\end{equation}
see \cite[Theorem~II.4.15]{Lawson_Michelsohn_1989}.

This paper is concerned with Killing spinors, i.e. spinors satisfying~\eqref{eqn:Killingeqn}. In this case, the curvature tensor acts as
\begin{equation}
 \label{eqn:curvatureonkillingspinor}
 R(X,Y)\psi=2\lambda^2(YX+g(X,Y))\psi,
\end{equation}
see e.g. \cite[Lemma 4.1]{Conti_Segnan_2024}. The simplest examples of pseudo-Riemannian manifolds admitting a Killing spinor are those with constant curvature, in particular hyperbolic space. We will consider it in its half-space incarnation, defined as
\begin{equation}
	\label{eqn:hyperbolicspace_as_a_manifold}
	H^\epsilon_a=\{(x_1,\dotsc, x_{n},t)\in\R^{n+1}, t>0\}
\end{equation}
endowed with the metric
\begin{equation}
	\label{eqn:hyperbolichalfspace}
	\frac{1}{a^2t^2}(\epsilon_1 dx^1\otimes dx^1 + \dotsc +\epsilon_{n} dx^{n}\otimes dx^{n} +\epsilon_{n+1} dt\otimes dt),
\end{equation}
where $\epsilon=(\epsilon_1,\dotsc, \epsilon_{n+1})\in\{-1,1\}^{n+1}$, $a>0$. If all $\epsilon_i$ are positive this is ordinary hyperbolic space; otherwise, it is isometric to the complement of a degenerate hyperplane in hyperbolic space (see~\cite{SeppiTrebeschi}), and in particular not complete. For $H^\epsilon_a$, every element of the spin representation extends to a Killing spinor (\cite[Theorem~1]{Baum_imaginary}).

For the Riemannian hyperbolic space, the constant $\lambda$ in~\eqref{eqn:Killingeqn} is imaginary, due to~\eqref{eqn:scalarifKilling}.
The Riemannian hyperbolic space is of course homogeneous; it turns out that it is the only homogeneous Riemannian manifold admitting a Killing spinor with $\lambda$ imaginary. This fact can be proved in two ways. Here, we give a proof based on \cite[Theorems 2-3]{Baum_imaginary} and standard techniques of Riemannian geometry.

We begin with an explicit description of Killing fields on a specific type of warped product.
\begin{lemma}
	\label{lemma:WarpKill}
	Let $(F,g)$ be a Riemannian manifold and let $(F\times \R,\tilde g)$ be the warped product, $\tilde g=e^{-4\mu t}g+dt^2$, with $\mu$ a nonzero constant. A vector field on $F\times\R$ is Killing for $\tilde g$ if and only if it takes the form
	\[\tilde Y=X - \frac1{4\mu}e^{4\mu t}\grad_g f+ f\D t,\]
	where:
	\begin{itemize}
		\item $f\colon F\to\R$ is a function such that $\grad_g f$ is parallel;
		\item $X$ is a vector field on $F$ such that $\Lie_{X}g=4\mu fg$.
	\end{itemize}
	Both $X$ and $f$ are uniquely determined by $\tilde Y$.
\end{lemma}
\begin{proof}
	Let $\tilde Y=X_t+ \tilde f\D t$, where $X_t$ is a one-parameter family of vector fields on $F$ and $\tilde f$ a function on $F\times\R$. We indicate by $\tilde\Lie_{\tilde Y}$ the Lie derivative in $F\times\R$ and by $\Lie_{X_t}$ the Lie derivative taken along each hypersurface $F\times\{t\}$. We will use the notations $\tilde f'$ and $X_t'$ to denote derivative in the $t$ variable; since $X_t$ can also be viewed as a vector field on $F\times\R$, we can write $[\D t,X_t]=X_t'$.

	Let $Z$ be a vector field on $F$, extended to $F\times\R$ in the obvious way, so that $[\D t,Z]=0$.  Thus,
	\[\left[\tilde Y,\D{t}\right]=-X_t'-f'\D{t}, \qquad \left[\tilde Y,Z\right]=\Lie_{X_t}Z -(Zf)\D{t}.\]
	Now suppose that $\tilde Y$ is Killing, and $Z,W$ are vector fields on $F$ extended to $F\times\R$ as above. Then
	\[0=(\tilde\Lie_{\tilde Y}\tilde g)\left(\D t, \D t\right)=
		\tilde\Lie_{\tilde Y}\left(\tilde g\left(\D{t},\D{t}\right)\right)-2\tilde g\left(\left[\tilde Y,\D{t}\right],\D t\right)
		=2\tilde f';\]
	thus, $\tilde f$ is independent of $t$ and can be identified with a function $f\colon F\to\R$, as in the statement. Furthermore,
	\begin{multline*}
		0=(\tilde\Lie_{\tilde Y}\tilde g)(Z,W)=\tilde\Lie_{\tilde Y}(e^{-4\mu t}g(Z,W))-e^{-4\mu t}g([\tilde Y,Z],W)-e^{-4\mu t}g(Z,[\tilde Y,W])\\
		=-4\mu fe^{-4\mu t}g(Z,W)+ e^{-4\mu t}(\Lie_{X_t} g)(Z,W),
	\end{multline*}
	and therefore
	\begin{equation}
		\label{eqn:Xtkilling}
		\Lie_{X_t} g=4\mu f g, \quad \text{ for all } t.
	\end{equation}
	Moreover,
	\begin{multline*}
		0=(\tilde\Lie_{\tilde Y}\tilde g)\left(Z, \D t\right)
		=-\tilde g\left([\tilde Y,Z],\D{t}\right)-\tilde g\left(Z,\left[\tilde Y,\D{t}\right]\right)\\
		=-\tilde g\left(-(Zf)\D t,\D t\right) - \tilde g (Z,-X_t')
		=Zf + e^{-4\mu t}g(X_t',Z),
	\end{multline*}
	i.e. $\grad_g f = -e^{-4\mu t}X_t'$; integrating in $t$ yields
	\[X_t=X-\frac1{4\mu} e^{4\mu t}\grad_g f,\]
	where $X$ is a vector field satisfying~\eqref{eqn:Xtkilling}.

	Therefore, $\tilde Y$ has the form in the statement; it only remains to show that $\grad_g f$ is parallel. Substituting in~\eqref{eqn:Xtkilling}, we see that $\Lie_{\grad_g f}g=0$, i.e.  $\grad_g f$ is a Killing vector field. Therefore, $\nabla \grad_gf$ is skew-symmetric; since it coincides with the Hessian of $f$, which is symmetric, we conclude that $\grad_g f$ is parallel.

	The fact that any $\tilde Y$ as in the statement is Killing follows from the same computations. Finally, $f=dt(\tilde Y)$ is clearly determined by $\tilde Y$, and so must be $X$.
\end{proof}
Restricting to homogeneous warped products where in addition $g$ is Ricci-flat, we obtain the following characterization:
\begin{proposition}
	\label{prop:homogeneouswarped}
	Let $(F,g)$ be a Ricci-flat Riemannian manifold and let $\tilde g=e^{-4\mu t}g+dt^2$ be a warped product metric on $F\times\R$, with $\mu$ a nonzero constant. If $(F\times \R,\tilde g)$ is homogeneous, then $F$ is flat and $F\times\R$ is isometric to hyperbolic space.
\end{proposition}
\begin{proof}
	We first prove that $F$ is flat. Let $\lie{iso}(F\times\R)$ be the Lie algebra of Killing vector fields on $(F\times\R,\tilde g)$. By Lemma~\ref{lemma:WarpKill}, any Killing vector field determines a parallel vector field $\grad_g f$, i.e. we have an $\R$-linear map
	\begin{equation}
		\label{eqn:linearKill}
		\lie{iso}(F\times\R)\to \mathfrak{X}(F), \quad \tilde Y\mapsto \grad_g f=\grad_g (dt(\tilde Y)),
	\end{equation}
	whose image $V$ consists of parallel vector fields. Choose Killing vector fields
	\[\tilde Y_i=X_i-\frac1{4\mu}e^{4\mu t}\grad_g f_i + f_i\D t, \quad i=1,\dotsc, k,\]
	such that $\grad_g f_1,\dotsc, \grad_g f_k$ is a basis of $V$. Notice that each $\grad_g f$ in $V$ is a complete vector field, because its integral lines are geodesics and $F$ is complete. These integral lines are mapped to straight lines through the origin under the map $(f_1,\dotsc, f_k)\colon F\to\R^k$; in particular, this map is surjective, and also a submersion, as the gradients are independent. Since the gradients are parallel, $F$ is a Riemannian product $\check F\times\R^k$, where $\R^k$ has the Euclidean metric.

	By Lemma~\ref{lemma:WarpKill}, the kernel of~\eqref{eqn:linearKill} consists of vector fields of the form $X+c\D t$
	with $c$ constant and $\Lie_{X} g=4\mu cg$,
	and the space $\lie{iso}(F)$ of  Killing vector fields on $F$ can be identified with the subspace of the kernel for which  $c$ is zero. Setting
	\[W=\Span{\tilde Y_1,\dotsc, \tilde Y_k}\oplus\lie{iso}(F),\]
	we see that $W$ has codimension at most one in
	\[\lie{iso}(F\times\R)=\Span{\tilde Y_1,\dotsc, \tilde Y_k}\oplus\left\{X+c\D t\mid \Lie_X g=4\mu c g, c\in\R\right\}.\]
	Fix $p=(x,\underline{0},0)\in \check{F}\times \R^k\times\R$; evaluating at $p$ gives a linear map
	\begin{equation*}
		\label{eqn:evaluateKillingatp}
		\rho\colon \lie{iso}(F\times\R)\to T_p(F\times \R), \quad \tilde Y\mapsto \tilde Y_p,
	\end{equation*}
	which is surjective because $F\times\R$ is homogeneous. Then $\rho(W)$ has codimension at most one in $T_p(F\times\R)$. For every $i=1,\dotsc, k$, $f_i=0$ at $(x,\underline 0)$, and $\tilde Y_i$ is tangent to $F\times\{0\}$ at $p$;
	hence $\rho(W)\subseteq T_{x,\underline 0}F$. Since $\rho(W)$ has codimension at most one in $T_p(F\times\R)$, we conclude $\rho(W)=T_{x,\underline 0}F$. Hence, on $\check{F}\times\{\underline 0\}$, any $Z$ in $W$ is tangent to $F$ and satisfies $\Lie_{Z} g=4\mu fg=0$; restricting to $\check{F}\times\{\underline 0\}$ and projecting, $Z$ determines a Killing vector field on $\check{F}$.  Thus, $T_x\check{F}$ is spanned by Killing vector fields of $\check{F}$. This shows that $\check{F}$ is locally homogeneous; passing to the universal cover, we obtain a homogeneous  Ricci-flat Riemannian manifold, which is necessarily flat, as proved in~\cite{Alekseevskii_Kimelfeld_1975}.

	Since  $F$ is flat, the metric $\tilde g$ is locally of the form $e^{-4\mu t}(dx_1^2+\dotsc + dx_n^2)+dt^2$, which up to a change of coordinate is equivalent to~\eqref{eqn:hyperbolichalfspace}. Thus, $F\times\R$ has constant negative curvature. Since the only homogeneous Riemannian manifolds of constant negative curvature are the hyperbolic spaces (\cite[Theorem~2.7.1]{Wolf:SpacesOfConstantCurvature}), the statement is proved.
\end{proof}

Combining Proposition~\ref{prop:homogeneouswarped} with~\cite{Baum_imaginary} we obtain:
\begin{corollary}
	\label{cor:conbaum}
	The only homogeneous Riemannian manifold carrying a Killing spinor with $\lambda$ imaginary is hyperbolic space.
\end{corollary}
\begin{proof}
	Homogeneous Riemannian manifolds are complete. By \cite[Theorems 2-3]{Baum_imaginary}, a complete Riemannian manifold with a Killing spinor with $\lambda=i\mu$, $\mu\in\R\setminus\{0\}$ is either isometric to hyperbolic space or to a warped product of the form $\tilde g=e^{-4\mu t}g+dt^2$ on the manifold $F\times\R$, where $(F,g)$ is a complete, Ricci-flat Riemannian manifold admitting a parallel spinor. By Proposition~\ref{prop:homogeneouswarped}, the second case reduces to the first for homogeneous metrics.
\end{proof}

\section{Standard and pseudo-Iwasawa solvmanifolds}
\label{sec:iwasawa}
In this preparatory section, we recall the notions of standard and pseudo-Iwasawa decompositions on a solvmanifold and the relation between Einstein solvmanifolds and nilsolitons, following~\cite{ContiRossi_IndeNilsEinsSov}; we also give some useful formulae for the Levi-Civita connection on a pseudo-Iwasawa solvmanifold and its curvature.

From now on, we will consider pseudo-Riemannian solvmanifolds, namely solvable Lie groups endowed with a left-invariant metric. We will denote by $\tilde G$ the solvable Lie group, by $\tilde\g$ its Lie algebra, and by $\tilde g$ the nondegenerate scalar product on $\tilde\g$ which determines the metric by left translation. The Levi-Civita connection and its curvature can then be viewed as linear maps on the Lie algebra $\tilde\g$, whereas spinors can be viewed as smooth maps from $\tilde G$ to the spinor representation. We stress the fact that spinors will not be assumed to be left-invariant.

Recall from~\cite{ContiRossi_IndeNilsEinsSov} that a solvmanifold is said to admit a \emph{standard decomposition} if its Lie algebra $\tilde \g$ decomposes as an orthogonal sum $\tilde\g=\g\rtimes\lie a$,  where $\g$ is a nilpotent ideal and $\lie a$ an abelian subalgebra; this condition depends not only on the Lie algebra structure, but also on the metric assigned to the solvmanifold. Given a standard decomposition, we will write the metric in the form
\begin{equation}
	\label{eqn:standardmetric}
	\tilde g=g+\sum_\alpha \epsilon_\alpha e^\alpha\otimes e^\alpha,
\end{equation}
where $\{e_\alpha\}$ is an orthonormal basis of $\lie a$, $\{e^\alpha\}$ is the dual basis, $\epsilon_\alpha=\pm1$, and $g$ is a metric on $\g$. Similarly, we will write $g=\sum_i \epsilon_i e^i\otimes e^i$; in general, we will use indices $i,j$  for elements of $\g$, and $\alpha,\beta$ for elements of $\lie a$. We will set
\[\phi_\alpha=-\ad e_\alpha.\]

In the Riemannian case, Einstein solvmanifolds always have a standard decomposition. We illustrate this in the following remarks; the reader may consult \cite{Jablonski:Survey} for a general survey on homogeneous Riemannian Einstein manifolds.
\begin{remark}
	\label{remark:stdexistsnonzero}
	In the Riemannian case (see e.g.~\cite{Heber_1998,Lauret:Nilsolitons}), a solvmanifold is said to be standard if the Lie algebra $\tilde\g$ decomposes as an orthogonal sum $\tilde\g'\rtimes (\tilde\g')^\perp$, where $\tilde\g'=[\tilde\g,\tilde\g]$ is the derived algebra and its orthogonal complement $(\tilde\g')^\perp$ is abelian; this is a standard decomposition in the sense above, because the derived algebra of a solvable Lie algebra is nilpotent. The definition of standard decomposition is more general, because it does not impose $\g=\tilde\g'$; however, the two notions agree for Riemannian Einstein solvmanifolds of nonzero scalar curvature. Indeed, by~\cite{Lauret_2010}, a Riemannian Einstein solvmanifold with nonzero scalar curvature is standard, i.e. admits a standard decomposition $\tilde\g'\rtimes(\tilde\g')^\perp$. This is the only standard decomposition: given another, say  $\g\rtimes\lie a$, on the one hand
	\[\tilde \g'=[\g\rtimes\lie a,\g\rtimes\lie a]\subseteq \lie g,\]
	and on the other hand for any $x$ in $\g\cap(\tilde\g')^\perp$, $\ad x$ is simultaneously normal (by \cite[Theorem~B]{Heber_1998}) and nilpotent (because $\g$ is nilpotent), hence zero; this implies that $x$ is in the center. Since $x$ is orthogonal to the derived algebra, $\tilde \g$ splits as an orthogonal direct product $\Span{x}\oplus\Span{x}^\perp$, so if $x$ is nonzero the solvmanifold is a Riemannian product with a flat factor, which is absurd.
	Thus, $\g\cap(\tilde\g')^\perp$ is zero, and $\g$ cannot be larger than $\tilde\g'$, i.e. the standard decomposition is unique.
\end{remark}

\begin{remark}
	\label{remark:stdexistszero}
	A Ricci-flat Riemannian solvmanifold is flat by~\cite{Alekseevskii_Kimelfeld_1975}, and by~\cite{Milnor_1976} has a standard decomposition with $\g$ commutative and $\lie a$ acting by skew-symmetric transformations. However, the standard decomposition is not unique in this case, as any abelian direct factor in $\tilde\g$ can be assigned to either $\g$ or $\lie a$.
\end{remark}

\begin{remark}
	On Einstein solvmanifolds of indefinite signature, standard decompositions may not exist. For instance, on solvable Lie algebras $\tilde\g$ such that the nilradical $\lie n$ coincides with the derived algebra $\tilde \g'=[\tilde\g,\tilde\g]$, an Einstein metric may exist for which $\lie n=\tilde \g'$ is degenerate, preventing the existence of a standard decomposition (see \cite[Example~1.6]{ContiRossi_IndeNilsEinsSov}). See also~\cite{Rossi_2025} for an example of an Einstein solvmanifold where $\lie n=\tilde\g'$ is nondegenerate but no standard decomposition exists. Moreover, standard decompositions may not be unique, not only in the flat case, but also for Ricci-flat, non-flat metrics (\cite[Example~4.16]{ContiRossi_IndeNilsEinsSov}) and Einstein metrics of nonzero scalar curvature (\cite[Example~3.3]{ContiRossi_IndeNilsEinsSov}).
\end{remark}

The Ricci tensor of a solvmanifold with a standard decomposition has a simple expression, see e.g.~\cite[Proposition~1.10]{ContiRossi_IndeNilsEinsSov}; we will use the following adapted version:
\begin{proposition}
	\label{prop:ricandrictilde}
	Let $\g$ be a nilpotent Lie algebra, let $\lie a$ be an abelian Lie algebra, and let $\tilde g$  be a metric of the form~\eqref{eqn:standardmetric} on $\tilde\g=\g\rtimes\lie a$. The Ricci tensors of the metrics $g$, $\tilde g$ are related by
	\begin{gather*}
		\widetilde\ric(v,w)=\ric(v,w)+\sum_\alpha\frac12\epsilon_\alpha g([\phi_\alpha,\phi_\alpha^*](v),w) - \epsilon_\alpha g((\phi_\alpha)^s(v),w)\Tr\phi_\alpha,\\
		\widetilde\ric(v,e_\alpha)=\frac12 \Tr(\ad v\circ \phi_\alpha^*),\qquad \widetilde \ric(e_\alpha,e_\beta)=-\Tr((\phi_\alpha)^s\circ\phi_\beta),
	\end{gather*}
	for any $v, w$ in $\g$.
\end{proposition}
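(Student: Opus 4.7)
The plan is to derive all three formulas directly from the Koszul formula, by first computing the Levi-Civita connection of $\tilde g$ component by component with respect to the splitting $\tilde\g=\g\oplus\lie a$, and then taking the appropriate traces of the curvature tensor.

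First I would compute $\nabla^{\tilde g}$ in the four cases determined by the splitting, using that $\lie a$ is abelian, $\g\perp\lie a$, and $[e_\alpha,v]=-\phi_\alpha(v)$ for $v\in\g$. Standard manipulations with the Koszul identity yield
\[
\nabla^{\tilde g}_v w \;=\; \nabla^g_v w \;-\; \sum_\alpha \eps_\alpha\, g(\phi_\alpha^s(v),w)\, e_\alpha,\quad \nabla^{\tilde g}_v e_\alpha=\phi_\alpha^s(v),\quad \nabla^{\tilde g}_{e_\alpha}v = (\phi_\alpha^s-\phi_\alpha)(v),
\]
and $\nabla^{\tilde g}_{e_\alpha}e_\beta=0$. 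In particular, the $\lie a$-valued ``second fundamental form'' of $\g$ is controlled by the symmetric parts $\phi_\alpha^s$, while the antisymmetric parts $\phi_\alpha^a$ appear only in $\nabla^{\tilde g}_{e_\alpha}$.

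Next I would substitute these expressions into $\tilde R(X,Y)Z=\nabla^{\tilde g}_X\nabla^{\tilde g}_Y Z-\nabla^{\tilde g}_Y\nabla^{\tilde g}_X Z-\nabla^{\tilde g}_{[X,Y]}Z$ and split the Ricci trace as $\tilde\ric(X,Y)=\sum_i \eps_i\,\tilde g(\tilde R(e_i,X)Y,e_i)+\sum_\alpha\eps_\alpha\,\tilde g(\tilde R(e_\alpha,X)Y,e_\alpha)$, over an orthonormal basis of $\g$ and the $e_\alpha$'s respectively. The formula for $\tilde\ric(e_\alpha,e_\beta)$ is the quickest: only the $\g$-trace contributes (since $\nabla^{\tilde g}|_{\lie a\times\lie a}=0$), and after using $[\lie a,\lie a]=0$ one finds that the sum collapses to $-\Tr(\phi_\alpha^s\circ\phi_\beta)$. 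For $\tilde\ric(v,e_\alpha)$, the contributions involving $\phi_\alpha^s$ cancel by the derivation property of $\phi_\alpha$ applied to the Koszul formula on $\g$, leaving the antisymmetric combination which rearranges into $\tfrac12\Tr(\ad v\circ \phi_\alpha^*)$.

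The most delicate step, and the main obstacle, is the identity for $\tilde\ric(v,w)$ with $v,w\in\g$. The $\g$-trace recovers $\ric(v,w)$ with a correction produced by the $\lie a$-valued component of $\nabla^{\tilde g}_vw$: cross terms between $\nabla^g$ and the $\phi_\alpha^s$-piece must be shown to assemble, after using $\phi_\alpha^*=2\phi_\alpha^s-\phi_\alpha$ and symmetry of the curvature operator, into $\tfrac12\eps_\alpha g([\phi_\alpha,\phi_\alpha^*](v),w)$. The $\lie a$-trace is handled by expanding $\tilde R(e_\alpha,v)w$ with the connection formulas above and carefully separating the terms $\nabla^{\tilde g}_v(\phi_\alpha^s w)$; the residual piece involves $\sum_i \eps_i g(\phi_\alpha^s(e_i),e_i)=\Tr\phi_\alpha^s=\Tr\phi_\alpha$ (the antisymmetric part being traceless), which is exactly the origin of the last term $-\eps_\alpha g(\phi_\alpha^s(v),w)\Tr\phi_\alpha$. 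The challenge throughout is purely organizational: keeping track of the signs $\eps_\alpha$, of which adjoint is taken with respect to $g$ versus $\tilde g$, and of the symmetric/antisymmetric decomposition of each $\phi_\alpha$, so that no contribution is double counted and the three types of correction terms in the statement are correctly isolated.
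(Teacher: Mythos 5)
This proposition is quoted from \cite[Proposition 1.10]{ContiRossi_IndeNilsEinsSov} and the present paper gives no proof of it, so there is nothing internal to compare your argument against; what you propose is the standard direct derivation, and it is sound. Your connection formulas are correct: the Koszul formula does give $\nabla^{\tilde g}_v w=\nabla^g_v w-\sum_\alpha\epsilon_\alpha g(\phi_\alpha^s(v),w)e_\alpha$, $\nabla^{\tilde g}_v e_\alpha=\phi_\alpha^s(v)$, $\nabla^{\tilde g}_{e_\alpha}v=(\phi_\alpha^s-\phi_\alpha)(v)$ and $\nabla^{\tilde g}_{e_\alpha}e_\beta=0$ (consistent, in the pseudo-Iwasawa case, with~\eqref{eqn:widetildenabla}). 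The traces then assemble as you describe: for $\tilde\ric(e_\alpha,e_\beta)$ only the $\g$-trace survives and yields $-\Tr(\phi_\alpha^s\phi_\beta^s)=-\Tr(\phi_\alpha^s\circ\phi_\beta)$; for $\tilde\ric(v,w)$ the $\Tr\phi_\alpha$ term comes from $\sum_i\epsilon_i g(\phi_\alpha^s e_i,e_i)=\Tr\phi_\alpha$ exactly as you say, and the remaining corrections combine into $g\bigl((2(\phi_\alpha^s)^2-\phi_\alpha^*\phi_\alpha^s-\phi_\alpha^s\phi_\alpha)v,w\bigr)=\tfrac12 g([\phi_\alpha,\phi_\alpha^*]v,w)$, which is the one identity your sketch asserts without carrying out; it does check. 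The only points worth making explicit in a full write-up are the nilpotency of $\g$ (needed so that $\sum_i\epsilon_i g(\nabla^g_{e_i}X,e_i)=-\Tr(\ad X)=0$, which kills several divergence-type terms in the $\tilde\ric(v,e_\alpha)$ computation) and the fact that $\phi_\alpha$ is a derivation of $\g$, which you do invoke.
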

\begin{proof}
	For endomorphisms $f_1,f_2$ of $\tilde \g$, a direct computation shows that
	\[\langle f_1, f_2\rangle= \Tr( f_1 f_2^*)= \Tr( f_1 (f_2^s-f_2^a)),\]
	where $f_2^a=\frac12(f_2-f_2^*)$ is the antisymmetric part of $f_2$. The statement now follows by applying the above to \cite[Proposition~1.10]{ContiRossi_IndeNilsEinsSov}.
\end{proof}
Proposition~\ref{prop:ricandrictilde} will be used in the proof of the main theorem, but it can also be applied to illustrate the correspondence between pseudo-Iwasawa solvmanifolds and nilsolitons.

A standard decomposition is called \emph{pseudo-Iwasawa} if $\ad X$ is symmetric for all $X\in\lie a$; we will say that a solvmanifold is pseudo-Iwasawa if it admits a pseudo-Iwasawa standard decomposition. These definitions generalize one given in~\cite{Heber_1998}. In our language, \cite[Theorem~B]{Heber_1998} shows that standard Riemannian Einstein solvmanifolds are  pseudo-Iwasawa up to isometry; the isometry is at the level of Riemannian manifolds, and does not imply that the corresponding Lie algebras are isomorphic.

\begin{remark}
	\label{remark:RiemannianSolv}
	Combining \cite[Theorem~B]{Heber_1998} with the facts that Riemannian Einstein solvmanifolds have standard decompositions (\cite{Lauret_2010}, see Remark~\ref{remark:stdexistsnonzero}) and that homogeneous Einstein Riemannian manifolds of negative scalar curvature are solvmanifolds (\cite{Bohm_Lafuente_2023}), we see that every homogeneous Einstein Riemannian manifold of negative scalar curvature is isometric to a solvmanifold with a standard decomposition of pseudo-Iwasawa type.
\end{remark}

Applying Proposition~\ref{prop:ricandrictilde} to a pseudo-Iwasawa Einstein solvmanifold, the  Ricci operator on $\g$ has the form
\begin{equation}
	\label{eqn:nilsoliton}
	\Ric=\lambda \id + D,
\end{equation}
where $\lambda$ is the Einstein constant and $D=-\widetilde\ad \bigl(\sum_\alpha\epsilon_\alpha (\Tr \phi_\alpha)e_\alpha\bigr)$. In general, a nilpotent Lie group with a left-invariant metric satisfying~\eqref{eqn:nilsoliton} for some derivation $D$ is called a nilsoliton; thus, any pseudo-Iwasawa Einstein solvmanifold determines a nilsoliton.

Conversely, a nilsoliton with $\Tr D^2\neq0$ determines an Einstein metric on the pseudo-Iwasawa solvmanifold $\g\rtimes_D\R$  (see~\cite{Lauret:Nilsolitons} and \cite[Theorem~3.9]{ContiRossi_IndeNilsEinsSov}, respectively for the Riemannian and indefinite case).

It follows that, in the Riemannian case, all homogeneous Einstein manifolds of negative scalar curvature can be reduced to extensions of a nilsoliton (see~\cite{Jablonski:Survey}). However, in the indefinite case, there are more Einstein solvmanifolds than those obtained by extending a nilsoliton (see e.g.~\cite{Conti_Rossi_Segnan_2023,Conti_Rossi_SegnanDalmasso_2024}).

\begin{remark}
	Nilsolitons owe their name to the fact that a left-invariant Riemannian metric on a nilpotent group is a Ricci soliton if and only if~\eqref{eqn:nilsoliton} holds for some derivation $D$. This result, initially stated in~\cite{Lauret:Nilsolitons}, was proved in~\cite{Jablonski_2014}.
\end{remark}

We conclude this section with some basic formulae that will be used in subsequent sections.
\begin{lemma}
	\label{lemma:equazione8}
	The Levi-Civita connection $\widetilde\nabla$ of $\tilde g$ can be expressed in terms of the Levi-Civita connection $\nabla$ of $g$ by
	\[
		\widetilde\nabla_{e_\alpha} x=0,\qquad  \widetilde\nabla_w{e_\alpha}=\phi_\alpha w,\qquad
		\widetilde \nabla_w v= \nabla_w v-\sum_\alpha g(\phi_\alpha w,v)\epsilon_\alpha e_\alpha,
	\]
	where $v,w$ are in $\g$ and $x$ is in $\tilde \g$. In particular, the following hold:
\[\tilde R(v,e_\alpha)w=-\widetilde\nabla_{\phi_\alpha v}w,\quad
\tilde R(v,e_\alpha)e_\beta =-\phi_\beta\phi_\alpha v.\]
\end{lemma}
\begin{proof}
	For any $x,y\in\tilde\g$, the Koszul formula can be written as
	\[\widetilde\nabla_x y=-\widetilde\ad(y)^sx-\frac12\widetilde\ad (x)^*y.\]
	For $v\in\g$, we have
	\[\widetilde\ad(v) = \ad (v) + \sum_\alpha e^\alpha\otimes\phi_\alpha v, \qquad \widetilde \ad (v)^* = \ad (v)^*+ \sum_\alpha  (\phi_\alpha v)^\flat \otimes \epsilon_\alpha e_\alpha;\]
	in particular,
	\[\widetilde\ad(v)^se_\alpha=\frac12(\widetilde\ad(v)e_\alpha + \widetilde\ad(v)^*e_\alpha)=\frac12\phi_\alpha v,\]
	where $e_\alpha$ is any element in the fixed basis of $\lie a$. Similarly, we have
	\[\widetilde\ad (e_\alpha) = -\phi_\alpha=\widetilde\ad (e_\alpha)^*.\]
	Hence, for $v,w\in\g$ we get
	\begin{gather*}
		\widetilde \nabla_{e_\alpha} v=
		-\widetilde\ad(v)^se_\alpha-\frac12(\widetilde\ad (e_\alpha))^*v=-\frac12\phi_\alpha v+\frac12\phi_\alpha v=0,\\
		\widetilde\nabla_w e_\alpha=-\widetilde\ad (e_\alpha)^s w - \frac12\widetilde\ad(w)^*e_\alpha=\phi_\alpha w,\\
		\widetilde \nabla_w v=-\widetilde\ad(v)^sw-\frac12\widetilde\ad (w)^*v= \nabla_w v-\sum_\alpha g(\phi_\alpha w,v)\epsilon_\alpha e_\alpha,\\
		\widetilde \nabla_{e_\alpha} e_\beta = 0.
	\end{gather*}

In particular, for $v,w\in\g$, the curvature satisfies
	\begin{align*}
		\tilde R(v,e_\alpha)w       & =\widetilde\nabla_{v}\widetilde\nabla_{e_\alpha} w-\widetilde\nabla_{e_\alpha}\widetilde\nabla_{v}w-\widetilde\nabla_{[v,e_\alpha]}w=-\widetilde\nabla_{\phi_\alpha v}w,                                                  \\
		\tilde R(v,e_\alpha)e_\beta & =\widetilde\nabla_{v}\widetilde\nabla_{e_\alpha} e_\beta-\widetilde\nabla_{e_\alpha}\widetilde\nabla_{v}e_\beta-\widetilde\nabla_{[v,e_\alpha]}e_\beta=-\widetilde\nabla_{\phi_\alpha v}e_\beta=-\phi_\beta\phi_\alpha v.\qedhere
	\end{align*}
	\end{proof}

\section{Pseudo-Iwasawa solvmanifolds with a Killing spinor}
\label{sec:construction}
In this section we construct a class of pseudo-Iwasawa solvmanifolds $\tilde G^{a,k}=G\rtimes\R$ admitting Killing spinors. This family includes the hyperbolic half-space, corresponding to the case $k=0$.

The underlying Lie algebra is the semidirect product $\g\rtimes_{\phi_0}\Span{e_0}$, where $\g$ is abelian and
\[\phi_0=a\id+ f,\]
where $a\in\R$, $k\in\N$ and $f$ is a fixed endomorphism of rank $k$ with $f^2=0$. We write the general element of $\g\cong\R^n$ as $(x,y,z)\in\R^k\times\R^{n-2k}\times\R^k$, and choose  the endomorphism $f$ and  metric $g$ of the form
\begin{equation}
 \label{eqn:canonicalnilpotent}
 f=\begin{pmatrix} 0 & 0 & I_k \\ 0 & 0 & 0 \\ 0 & 0 & 0 \end{pmatrix}, \qquad
g_{\delta,\epsilon}=\begin{pmatrix} 0 & 0 & \Delta \\ 0 & E & 0 \\ \Delta & 0 & 0 \end{pmatrix},
\end{equation}
where $I_k$ is the identity matrix and $\Delta,E$ are determined by $\delta=(\delta_1,\dotsc,\delta_k)\in\{\pm1\}^k$, $\epsilon=(\epsilon_1,\dotsc, \epsilon_{n-2k})\in\{\pm 1\}^{n-2k}$ via
\[\Delta= \diag(\delta_1,\dotsc, \delta_k), \quad E=\diag(\epsilon_1,\dotsc, \epsilon_{n-2k}).\]
We denote by $\tilde G^{a,k}$ the corresponding connected simply connected Lie group. When referring to $\tilde G^{a,k}$ as a solvmanifold, we will implicitly fix the metric
\[\tilde g_{\epsilon,\delta,\tau}=g_{\delta,\epsilon}+\epsilon_0 e^0\otimes e^0=g_{\delta,\epsilon}+\tau^2 e^0\otimes e^0,\]
where  $\tau\in\{1,i\}$ determines $\epsilon_0=\tau^2$.  The product law can be written as
\[(x,y,z,t)(x',y',z',t')=(e^{at}(x'+tz')+x,e^{at}y'+y,e^{at}z'+z,t+t'),\]
so that a left-invariant frame is given by
\[\left(e^{at}\D{x_i}, e^{at}\D{y_j}, e^{at}(t\D{x_i}+\D{z_i}), \D{t}\right).\]
The metric is then expressed as
\[\tilde g_{\delta,\epsilon
,\tau}=e^{-2at}\biggl(\sum_{i=1}^k \delta_i(dx^i-tdz^i)\odot dz^i + \sum_{j=1}^{n-2k} \epsilon_j dy^j\otimes dy^j \biggr)+ \epsilon_0dt^2.\]

We will allow $k=0$, in which case $\phi_0=a\id$, the variables $x,z$ do not appear, and  $\tilde G^{a,0}$ can be identified with the hyperbolic half-space $H^\epsilon_a$  by writing $s=e^{at}$ and rescaling the coordinates $y$ to obtain the metric \eqref{eqn:hyperbolichalfspace}.

If $a=0$, the metric takes the form
\begin{equation}
 \label{eqn:brinkmannfaciao}
 \tilde g_{\delta,\epsilon,\tau}=\sum_{i=1}^k \delta_i(dx^i-tdz^i)\odot dz^i + \sum_{j=1}^{n-2k} \epsilon_j dy^j\otimes dy^j +\epsilon_0 dt^2,
\end{equation}
and Lemma~\ref{lemma:equazione8} gives  $\widetilde\nabla v=0$ for all $v\in\ker f$. In particular, $\im f$ is a parallel isotropic distribution and the holonomy algebra is contained in the abelian Lie algebra
\[\Span{dz^i\otimes \D{x_j}}.\]
\begin{remark}
 \label{rmk:brinkmann}
If  $\epsilon_j=1$ for all $j$  and $k=1$, \eqref{eqn:brinkmannfaciao} gives a Lorentzian manifold with a parallel light-like vector field, also called a \emph{Brinkmann wave}; due the form of the holonomy algebra, these Lorentzian metrics fall into the  class characterized in \cite[Theorem 4.1]{Leistner_2002}.
\end{remark}
\begin{remark}
\label{remark:oneigenvalueimpliesGak}
If $\phi_0\colon\R^{p,q}\to\R^{p,q}$ is symmetric and only has one eigenvalue $a\in\R$, we can reduce to the situation \eqref{eqn:canonicalnilpotent} as follows. Write $\phi_0=a\id +f$; then $f^2=0$. Choose a decomposition
\[\R^{p,q}=\ker f\oplus U;\]
since $(\ker f)^\perp=\im f\subset \ker f$, $g$ induces a pairing between $U$ and $\im f$, and we can assume that $U$ is isotropic.  Write
\[\ker f=\im f\oplus W,\]
where $W=\ker f\cap U^{\perp}$; then $W\cap W^{\perp}=\{0\}$, so $W$ admits an orthonormal basis $\{w_i\}$. The symmetric bilinear form on $U$ given by $\langle v,w\rangle = g(fv,w)$ is nondegenerate; let $\{u_i\}$ be a $\langle,\rangle$-orthonormal basis, and let $\{v_i\}$ be the basis of $\im f$ obtained by setting $f(u_i)=v_i$. Adjoining the bases $\{v_i\}$, $\{w_i\}$ and $\{u_i\}$ we obtain a basis which puts $f,g$ in the form~\eqref{eqn:canonicalnilpotent}.
\end{remark}

\begin{remark}
The solvmanifolds $\tilde G^{a,k}$ are algebraic Ricci solitons, i.e. they satisfy \eqref{eqn:nilsoliton} for some derivation $D$, in this case a multiple of $f$. More precisely, Proposition~\ref{prop:ricandrictilde} gives
\begin{multline*}
\widetilde\Ric = -\epsilon_0\Tr(\phi_0) \phi_0 - \epsilon_0\Tr((\phi_0)^2)e^0\otimes e_0\\
=-\epsilon_0 na(a\id_\g + f) -\epsilon_0na^2e^0\otimes e_0
=-\epsilon_0na(a\id_{\tilde \g} + f).
\end{multline*}
In particular, the metric is Einstein only for $k=0$, namely in the case of the hyperbolic half-space.
\end{remark}

We will show that the solvmanifolds $\tilde G^{a,k}$ admit Killing spinors. This will require a technical lemma. Given $W\subset\R^{p,q}$, we will denote by $\Sigma_{p,q}^W$ the space of spinors $\psi\in\Sigma_{p,q}$ annihilated by $W$, i.e. such that  $w\cdot\psi=0$ for all $w$ in $W$. Notice that $\Sigma_{p,q}^W$ can only be nontrivial when $W$ is isotropic, implying in particular that $\dim W\leq p,q$.
\begin{lemma}
\label{lemma:countspinors}
Let $W\subset\R^{p,q}$ be an isotropic space of dimension $k\geq0$. Then
\begin{itemize}
\item $\Sigma_{p,q}^W$  has dimension $2^{[(p+q)/2]-k}$;
\item if $v$ is a nonisotropic vector in $W^\perp$, the map
\begin{equation}
 \label{eqn:annihilatedtoannihilated}
 \Sigma_{p,q}^W\to \Sigma_{p,q}^W, \quad \psi\mapsto v\cdot\psi
\end{equation}
is a multiple of the identity if $p+q=2k+1$ and has two eigenspaces of dimension $2^{[(p+q)/2]-k-1}$ otherwise.
\end{itemize}
\end{lemma}
\begin{proof}
Denote by $\langle,\rangle$ the complex bilinear form on $\C^{p+q}$ induced by complexifying the scalar product on $\R^{p,q}$. Then the complexified subspace $W^\C\subset\C^{p+q}$ is isotropic with respect to $\langle,\rangle$. We will distinguish several cases and prove the two items in each.

Suppose first that $p=q=k$, and denote by $\CCl(\C^{2k})$ the complex Clifford algebra induced by $\langle,\rangle$. The complex spinor representation of $\R^{k,k}$, which we denote $\Sigma_{k,k}$, is by construction a representation of $\CCl(\C^{2k})$, and a spinor is annihilated by $W$ if and only if it is annihilated by $W^\C$. Since $W^\C\subset\C^{2k}$ is maximal isotropic, it is known that there exists a complex spinor $\psi\in\Sigma_{k,k}$ annihilated by $W^\C$; the spinor $\psi$ is unique up to complex multiple (\cite[Proposition 9.7]{Lawson_Michelsohn_1989}). Thus, $\Sigma_{k,k}^W$ has dimension one, and the first item is proved in this case. By a count of dimensions, $W=W^\perp$, so the second part is an empty statement.

In general, we fix a $\langle,\rangle$-orthogonal decomposition,
\[\C^{p+q}=U\oplus V,\]
where $\dim U=2k$ and $W^\C$ is maximal isotropic in $U$. Notice that the generic element orthogonal to $W^\C$ lies in $W^\C\oplus V$; since $W$ acts trivially on $\Sigma_{p,q}^W$, there will be no loss of generality in assuming that $v$ lies in $V$. Notice also that the map~\eqref{eqn:annihilatedtoannihilated} is well defined because $v$ anticommutes with $W$ in the Clifford algebra.

If $V$ has dimension one, then (see e.g. \cite[Equation~(2)]{Baum_Kath_1999})
\[\CCl(\C^{p+q})=\CCl(U\oplus \C)\cong \CCl(U)\oplus\CCl(U).\]
The spinor representations associated to $U$ and to $\C^{p+q}=U\oplus \C$ correspond to the same representation of $\CCl(U)$. Arguing as above, we see that $\Sigma_{p,q}^W$ is one dimensional. For dimensional reason, a vector $v$ in the complementary $\C$ can only act on $\Sigma_{p,q}^W$ as a multiple of the identity.

If $V$ has dimension $2m$, then we have an isomorphism (\cite[Theorem 4.3]{Lawson_Michelsohn_1989})
\[\CCl(U\oplus V)\cong\CCl(U)\otimes\CCl(\C^2)^{\otimes m},\]
where $\CCl(\C^2)$ is the algebra of complex two-by-two matrices; accordingly, 
\[\Sigma_{p,q}\cong\Sigma_{2k}\otimes (\C^2)^{\otimes m},\]
and thus $\Sigma_{p,q}^W=\Sigma_{2k}^W\otimes (\C^2)^{\otimes m}$ has dimension $2^m$. The same conclusion is obtained for $V$ of dimension $2m+1$, writing $\C^{p+q}=(U\oplus\C^{2m})\oplus\C$ and combining the two arguments.

It remains to show that the second item holds when $V$ has dimension greater than one. In this case, the map \eqref{eqn:annihilatedtoannihilated} is diagonalizable because it squares to a multiple of the identity. Choosing a unit $w$ in $V$ orthogonal to $v$, Clifford multiplication by $w$ interchanges the eigenspaces of~\eqref{eqn:annihilatedtoannihilated}, because $v\cdot w + w\cdot v=0$, so the eigenspaces have the same dimension.
\end{proof}

\begin{proposition}
\label{prop:counterexamplestomaintheorem}
For any $k\geq0$, $\tau\in\{1,i\}$, $a\neq0$,  define the subspace
\[\Sigma_f^\pm=\{\psi\in\Sigma\mid f(v)\cdot\psi=0\, \forall v\in\R^{n},  \psi=\pm i\tau e_0\cdot\psi\}\]
of the fibre at the identity of the spinor bundle of $(\tilde G^{a,k},\tilde g_{\delta,\epsilon,\tau})$. Then
\begin{enumerate}
 \item For any $\psi\in\Sigma^\pm_f$, $e^{at/2}\psi$ is a Killing spinor on $\tilde G^{a,k}$ with Killing constant $\pm i\tau a/2$.
 \item If $n=2k$, the space $\Sigma_f^+\oplus \Sigma_f^{-}$ has dimension one, yielding one Killing spinor with constant equal to $i\tau a/2$ or $-i\tau a/2$ according to whether an even or odd number of the constants $\epsilon_0,\dotsc, \epsilon_k$ are positive.
 \item If $n>2k$, each space $\Sigma^\pm_f$ has dimension $2^{[(n-1)/2]-k}$, yielding $2^{[(n-1)/2]-k}$ linearly independent Killing spinors with Killing constant
 $i\tau a/2$ and as many with Killing constant $-i\tau a/2$.
\end{enumerate}
\end{proposition}
\begin{proof}
%
Recall from Lemma~\ref{lemma:equazione8} that
\[\widetilde\nabla_{e_a}=0, \quad \widetilde\nabla_w e_\alpha=\phi_0(w),\quad \widetilde\nabla_w v=-g(\phi_0(w),v) \epsilon_0 e_0.\]
Let
\[\Sigma_f=\{\psi\in\Sigma \mid f(v)\cdot\psi=0\,\forall v\in\R^{n}\}.\]
By Lemma~\ref{lemma:countspinors}, the linear map
\begin{equation}
 \label{eqn:multfore0}
 \Sigma_f\to\Sigma_f, \quad \sigma\mapsto e_0\cdot\sigma\textbf{};
\end{equation}
is diagonalizable with eigenvalues $\pm i\tau$.

In the particular case where $n=2k$, so that $\Sigma_f$ has dimension one, consider the volume element
\[\omega=e_0\cdot \prod_{i=1}^k \frac12(e_i-e_{k+i})\cdot(e_i+e_{k+i})
=e_0\cdot \prod_{i=1}^k \frac12(2\epsilon_i+e_ie_{k+i}),
\]
which is compatible with the orientation defined by the basis $\{e_0,\dotsc, e_{2k}\}$.
If $\psi$ is in $\Sigma_f$, then
\[\omega\cdot\psi=\epsilon_1\dotsm \epsilon_k e_0\cdot \psi.\]
By convention (see e.g. \cite{BarGauduchonMoroianu}), $\omega$ acts on spinors as multiplication by
\[\tau i^{k+(n+1)(n+2)/2}=\tau i^{k+(2k+1)(k+1)}=\tau i^{2k^2+1}=\tau i(-1)^k.\]
Thus,
\[\pm i\tau e_0\cdot\psi = \pm i\tau (-\epsilon_1)\dotsm (-\epsilon_k)( i\tau) \psi = \pm(-\epsilon_0)(-\epsilon_1)\dotsm (-\epsilon_k)\psi,
\]
and $\Sigma_f$ equals either $\Sigma^+_f$ or $\Sigma^-_f$ according to whether an even or odd number of the $\epsilon_i$ are positive.

We must prove that elements of $\Sigma^\pm_f$ can be extended to Killing spinors with Killing constant $\lambda=\pm i\tau a/2$; the dimensional counts then follow from Lemma~\ref{lemma:countspinors}.

Fix any $\psi\in\Sigma_f$ and extend it by left-invariance; then $\widetilde\nabla_{e_0}\psi=0$ and, for $X$ in $\R^{p,q}$,
\begin{equation}
\label{eqn:derivativeofSigmaf}
\begin{split}
 \widetilde\nabla_{X}\psi &= \frac14\sum_{j}(e^j)^\sharp \cdot (\widetilde\nabla_X e_j)\cdot \psi
+\frac14 \epsilon_0 e_0\cdot  (\widetilde\nabla_X e_0)\cdot \psi\\
&= -\frac14
 \phi_0(X)\cdot \epsilon_0 e_0\cdot \psi+\frac14  \epsilon_0 e_0\cdot \phi_0(X)\cdot\psi
=-\frac {\epsilon_0a}{2}  X\cdot  e_0\cdot\psi
\end{split}
\end{equation}
Now set
\[\Psi=e^{at/2}\psi,\]
and assume that $\psi$ is in $\Sigma^\pm_f$. Then \eqref{eqn:derivativeofSigmaf} implies
\[ \widetilde\nabla_{X}\Psi = e^{at/2}\widetilde\nabla_X\psi
=-\frac {\epsilon_0a}{2}e^{at/2}X\cdot (\mp i\tau^3)\psi= \pm \frac{i\tau a}2 X \cdot\Psi.\]
Moreover,
\[\widetilde\nabla_{e_0}\Psi=\frac{a}2\Psi= \pm \frac{i\tau a}2  e_0\cdot\Psi,\]
showing that $\Psi$ is a Killing spinor with Killing constant $\pm i\tau a/2$.
\end{proof}

\begin{remark}\label{remark:a0givesparallel}
The assumption $a\neq0$ in Proposition~\ref{prop:counterexamplestomaintheorem} is only necessary to obtain Killing spinors, which by our convention exclude parallel spinors. However, the same proof applies also to the case $a=0$, showing that the solvmanifolds $\tilde G^{0,k}$ admit left-invariant parallel spinors.
\end{remark}

\section{Proof of the main theorem}
\label{sec:killingNotIwasawa}
In this section we prove that the solvmanifolds $\tilde G^{a,k}$ constructed in Section~\ref{sec:construction} are the only pseudo-Iwasawa solvmanifolds admitting a Killing spinor.

Throughout this section, we consider spinors which are not necessarily invariant, and therefore take the form
$\psi=\sum_h a_h u_h$, where $a_h\colon\tilde G\to\C$ are smooth functions. For $x\in\tilde{\g}$, we will denote by $\Lie_x a_h$ the Lie derivative of $a_h$ relative to the left-invariant vector field associated to $x$.
\begin{lemma}
	\label{lemma:curvatureonpsi}
	Let $\tilde G$ be a solvmanifold with a standard decomposition $\tilde \g=\g\rtimes\lie a$ of pseudo-Iwasawa type, and write the metric as
	\[\tilde g= g+\sum_\alpha \epsilon_\alpha e^\alpha\otimes e^\alpha.\]
	If $\psi=\sum_h a_h u_h$ is a Killing spinor on $\tilde G$ with  Killing constant $\lambda$ and $v\in \g$, then
	\begin{equation}
		\label{eqn:amended2lambdasquared}
		2\lambda^2 v\cdot e_\alpha\cdot \psi =\lambda \phi_\alpha v\cdot \psi -\sum_h (\Lie_{\phi_\alpha v}a_h) u_h.
	\end{equation}
\end{lemma}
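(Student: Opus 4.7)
The plan is to compute the commutator $[\widetilde\nabla_v,\widetilde\nabla_{e_\alpha}]\psi$ in two different ways and equate the results.

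First, I would apply the Killing equation twice: differentiating $\widetilde\nabla_X\psi=\lambda X\cdot\psi$ along $Y$ gives $\widetilde\nabla_Y\widetilde\nabla_X\psi=\lambda(\widetilde\nabla_YX)\cdot\psi+\lambda^2 X\cdot Y\cdot\psi$. Specialising this to $(Y,X)=(v,e_\alpha)$ and to $(Y,X)=(e_\alpha,v)$, substituting $\widetilde\nabla_v e_\alpha=\phi_\alpha v$ and $\widetilde\nabla_{e_\alpha}v=0$ from~\eqref{eqn:widetildenabla}, and using the orthogonality $v\perp e_\alpha$ to write $e_\alpha\cdot v=-v\cdot e_\alpha$, one obtains
\[
[\widetilde\nabla_v,\widetilde\nabla_{e_\alpha}]\psi=\lambda\,\phi_\alpha v\cdot\psi-2\lambda^2\, v\cdot e_\alpha\cdot\psi.
\]

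Second, I would exploit the pseudo-Iwasawa identity $\widetilde\nabla_{e_\alpha}e_j=0$ for every frame vector $e_j$, which by~\eqref{eqn:ConnectionOnSpinorNabla} simultaneously implies that $\widetilde\nabla_{e_\alpha}u_h=0$ and that, for any spinor $\Phi=\sum_h c_h u_h$, $\widetilde\nabla_{e_\alpha}\Phi=\sum_h(\Lie_{e_\alpha}c_h)u_h$; in particular $\widetilde\nabla_{e_\alpha}\psi=\sum_h(\Lie_{e_\alpha}a_h)u_h$. Expanding $\widetilde\nabla_v\widetilde\nabla_{e_\alpha}\psi$ via~\eqref{eqn:ConnectionOnSpinorNabla} applied to the spinor with coefficients $\Lie_{e_\alpha}a_h$, and computing $\widetilde\nabla_{e_\alpha}\widetilde\nabla_v\psi$ via the simplification above applied to $\widetilde\nabla_v\psi$, the Clifford-correction term $\tfrac14\sum_{h,j}(\Lie_{e_\alpha}a_h)(e^j)^\sharp\cdot\widetilde\nabla_ve_j\cdot u_h$ appears identically in both and cancels in the commutator. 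What remains is only the commutator of Lie derivatives of scalar functions, so that
\[
[\widetilde\nabla_v,\widetilde\nabla_{e_\alpha}]\psi=\sum_h(\Lie_{[v,e_\alpha]}a_h)u_h=\sum_h(\Lie_{\phi_\alpha v}a_h)u_h.
\]

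Equating the two expressions and rearranging yields~\eqref{eqn:amended2lambdasquared}. I expect the main subtlety to be the cancellation in the second step: one must notice that in the formula for $\widetilde\nabla_v\psi$ the frame-correction piece $\tfrac14\sum a_h(e^j)^\sharp\cdot\widetilde\nabla_v e_j\cdot u_h$ depends on the point only through the scalar coefficients $a_h$, since $\widetilde\nabla_v e_j$ and the Clifford representation matrices in the frame $\{u_h\}$ are constants of the Lie algebra. This is what makes the Clifford pieces of $\widetilde\nabla_v\widetilde\nabla_{e_\alpha}\psi$ and $\widetilde\nabla_{e_\alpha}\widetilde\nabla_v\psi$ match term-by-term and drop out of the commutator.
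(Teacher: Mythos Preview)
Your argument is correct. Both steps check out: the Killing-equation computation of the commutator is straightforward, and the second computation hinges on the fact that $\widetilde\nabla_{e_\alpha}$ annihilates the entire left-invariant frame (both the $e_j\in\g$ and the $e_\beta\in\lie a$), so that $\widetilde\nabla_{e_\alpha}$ reduces to Lie differentiation of scalar coefficients; the Clifford correction terms then cancel exactly as you describe, because the matrices $(e^j)^\sharp\cdot\widetilde\nabla_v e_j$ are constants in the left-invariant trivialization.

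Your route, however, is genuinely different from the paper's. The paper computes $\tilde R(v,e_\alpha)\psi$ in two ways: once via the Killing equation (obtaining $-2\lambda^2 v\cdot e_\alpha\cdot\psi$), and once by first working out the curvature on tangent vectors, $\tilde R(v,e_\alpha)w=-\widetilde\nabla_{\phi_\alpha v}w$ and $\tilde R(v,e_\alpha)e_\beta=-\phi_\beta\phi_\alpha v$, and then feeding these into the standard formula $\tilde R(X,Y)\psi=\tfrac14\sum (e^j)^\sharp\cdot\tilde R(X,Y)e_j\cdot\psi$. Equating the two produces an intermediate identity which still contains the sum $\tfrac14\sum_j\epsilon_j e_j\cdot(\widetilde\nabla_{\phi_\alpha v}e_j)\cdot\psi$; the paper then invokes~\eqref{eqn:ConnectionOnSpinorNabla} with $w=\phi_\alpha v$ to rewrite this sum in terms of $\lambda\phi_\alpha v\cdot\psi$ and $\sum_h(\Lie_{\phi_\alpha v}a_h)u_h$, at which point the $\phi_\beta$-terms cancel and the result follows. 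In effect, the paper passes through the tangent curvature and then undoes part of that passage via the spinor connection formula, whereas you work directly with the spinor commutator and never touch the tangent curvature. Your organization is shorter and isolates the single structural input (the vanishing of $\widetilde\nabla_{e_\alpha}$ on the frame) more cleanly; the paper's version has the minor advantage of making the curvature tensor $\tilde R(v,e_\alpha)$ explicit, which could be reused elsewhere.
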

\begin{proof}
	Since $\psi$ is Killing,
	\[
		\begin{split}
			\tilde R(v,e_\alpha)\psi & =\widetilde\nabla_{v}\widetilde\nabla_{e_\alpha} \psi-\widetilde\nabla_{e_\alpha}\widetilde\nabla_{v}\psi-\widetilde\nabla_{[v,e_\alpha]}\psi \\
			                         & =\lambda\bigl(\widetilde\nabla_{v}(e_\alpha\cdot\psi)-\widetilde\nabla_{e_\alpha}(v\cdot\psi)-\phi_\alpha v\cdot\psi\bigr)                    \\
			                         & =\lambda\bigl(\phi_\alpha  v \cdot\psi+\lambda e_\alpha \cdot v\cdot\psi-\lambda v\cdot e_\alpha\cdot \psi- \phi_\alpha v\cdot \psi\bigr)     \\
			                         & =-2\lambda^2 v\cdot e_\alpha\cdot \psi.
		\end{split}
	\]
	 Moreover, by Lemma~\ref{lemma:equazione8}, for any spinor $\psi$ and $v$ in $\g$,
	\[
		\begin{split}
			\tilde R(v,e_\alpha)\psi & = \frac14\sum_j \epsilon_j e_j\cdot \tilde R(v,e_\alpha)e_j\cdot\psi
			+\frac14\sum_\beta \epsilon_\beta e_\beta\cdot \tilde R(v,e_\alpha)e_\beta\cdot\psi                            \\
			                         & =-\frac14\sum_j \epsilon_j e_j\cdot (\widetilde\nabla_{\phi_\alpha v} e_j)\cdot\psi
			-\frac14\sum_\beta \epsilon_\beta e_\beta\cdot  (\phi_\beta\phi_\alpha v)\cdot\psi.
		\end{split}\]
	Comparing the two expressions, we obtain
	\begin{equation}
		\label{eqn:curvatureonpsi:intermediate}
		2\lambda^2 v\cdot e_\alpha\cdot \psi = \frac14\sum_j \epsilon_j e_j\cdot (\widetilde\nabla_{\phi_\alpha v} e_j)\cdot\psi
		+\frac14\sum_{\beta}  \epsilon_\beta e_\beta\cdot(\phi_\beta\phi_\alpha v)\cdot \psi.
	\end{equation}
	By~\eqref{eqn:ConnectionOnSpinorNabla}, for $w\in\g$  we have
	\[\begin{split}
			\widetilde\nabla_{w}\psi & =\sum_h (\Lie_{w}a_h) u_h + \frac14\sum_{j} \epsilon_j e_j \cdot (\widetilde\nabla_w e_j)\cdot \psi
			+ \frac14\sum_{\beta} \epsilon_\beta e_\beta \cdot (\widetilde\nabla_w e_\beta)\cdot \psi                                      \\
			                         & =\sum_h (\Lie_{w}a_h) u_h + \frac14\sum_{j} \epsilon_j e_j \cdot (\widetilde\nabla_w e_j)\cdot \psi
			+ \frac14\sum_{\beta} \epsilon_\beta e_\beta \cdot (\phi_\beta w)\cdot \psi,
		\end{split}\]
	so that
	\[ \frac14\sum_{j} \epsilon_j e_j \cdot (\widetilde\nabla_w e_j)\cdot \psi = \lambda w\cdot \psi -\sum_h (\Lie_{w}a_h) u_h
		- \frac14\sum_{\beta} \epsilon_\beta e_\beta \cdot (\phi_\beta w)\cdot \psi.\]
	Setting $w=\phi_\alpha v$ and substituting into~\eqref{eqn:curvatureonpsi:intermediate}, we obtain
	\[2\lambda^2 v\cdot e_\alpha\cdot \psi
		=\lambda \phi_\alpha v\cdot \psi -\sum_h (\Lie_{\phi_\alpha v}a_h) u_h.\qedhere\]
\end{proof}

We will need the following algebraic property of Clifford multiplication.
\begin{lemma}
	\label{lemma:end_dot_spin}
	Let $F\colon \R^{p,q}\to\R^{p,q}$ be a self-adjoint linear map, and suppose that for some nonzero $\psi$ in $\Sigma$,
	\[F(v)\cdot \psi=v\cdot\psi, \quad v\in \R^{p,q}.\]
	Then $F$ has $1$ as its only eigenvalue, i.e.\ $F-\id$ is nilpotent.
\end{lemma}
\begin{proof}
	Let $f=F-\id$, which is symmetric. By hypothesis, $f(v)\cdot\psi=0$ for all $v$, and the Clifford identity implies that $\im f$ is totally isotropic. Since $f$ is symmetric,
	\[\ker f=(\im f)^\perp\supseteq \im f;\] therefore, $f^2=0$.
\end{proof}
The key lemma in the proof of the main theorem is the following:
\begin{lemma}
	\label{lemma:abelian}
	Let $\tilde G$ be a solvmanifold with a pseudo-Iwasawa standard decomposition $\tilde \g=\g\rtimes\lie a$. If $\tilde G$ admits a Killing spinor  with $\lambda\neq0$, then $\g$ is abelian and each $\phi_\alpha^2\colon\g\to\g$ is a (Lie algebra) isomorphism which only has the eigenvalue $-4\epsilon_\alpha \lambda^2$.
	\end{lemma}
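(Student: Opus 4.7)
My plan is to derive from Lemma~\ref{lemma:curvatureonpsi} a Clifford-module relation $(\phi_\alpha^2+4\epsilon_\alpha\lambda^2\id)v\cdot\psi=0$, upgrade it via Lemma~\ref{lemma:end_dot_spin} to the operator identity $\phi_\alpha^2=-4\epsilon_\alpha\lambda^2\id$ on $\g$, and then exploit the derivation property of $\phi_\alpha$ to conclude that $\g$ is abelian.

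Writing $F(w):=\sum_h(\Lie_w a_h)u_h$ so that Eq.~\eqref{eqn:amended2lambdasquared} becomes
\[2\lambda^2 v\cdot e_\alpha\cdot\psi=\lambda\phi_\alpha v\cdot\psi-F(\phi_\alpha v),\]
I would differentiate this equation along $e_\alpha$ in two different ways and equate. On one hand, using $\tilde\nabla_{e_\alpha}v=\tilde\nabla_{e_\alpha}e_\alpha=0$ from~\eqref{eqn:widetildenabla}, $\tilde\nabla_{e_\alpha}\psi=\lambda e_\alpha\cdot\psi$, and the Clifford relations $e_\alpha\cdot e_\alpha=-\epsilon_\alpha$ and $e_\alpha\cdot\phi_\alpha v=-\phi_\alpha v\cdot e_\alpha$, applying $\tilde\nabla_{e_\alpha}$ to both sides gives
\[\tilde\nabla_{e_\alpha}F(\phi_\alpha v)=\lambda^2\phi_\alpha v\cdot e_\alpha\cdot\psi+2\lambda^3\epsilon_\alpha v\cdot\psi.\]
On the other hand, the left-invariant spinor frame $\{u_h\}$ is $\tilde\nabla_{e_\alpha}$-parallel (since $\tilde\nabla_{e_\alpha}$ kills every left-invariant orthonormal vector), so $\tilde\nabla_{e_\alpha}F(\phi_\alpha v)$ reduces to the $e_\alpha$-derivative of its components; rearranging via $[e_\alpha,\phi_\alpha v]=-\phi_\alpha^2 v$, the identity $\sum_h(\Lie_{e_\alpha}a_h)u_h=\lambda e_\alpha\cdot\psi$, and the fact that Clifford multiplication by $e_\alpha$ has constant matrix in this frame, the same quantity equals $\lambda e_\alpha\cdot F(\phi_\alpha v)-F(\phi_\alpha^2 v)$. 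Equating the two and using Eq.~\eqref{eqn:amended2lambdasquared} applied to $v$ and to $\phi_\alpha v$ to cancel $F(\phi_\alpha v)$, $F(\phi_\alpha^2 v)$, and $\phi_\alpha v\cdot e_\alpha\cdot\psi$, the identity $\phi_\alpha^2 v\cdot\psi=-4\epsilon_\alpha\lambda^2 v\cdot\psi$ drops out for every $v\in\g$.

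Next, I would extend the symmetric operator $-\tfrac{1}{4\epsilon_\alpha\lambda^2}\phi_\alpha^2$ of $\g$ to a symmetric endomorphism $f$ of $\tilde\g$ by declaring it to act as the identity on $\lie a$; this is legitimate because $\g$ and $\lie a$ are $\tilde g$-orthogonal and $\phi_\alpha$ is symmetric on $\g$ by the pseudo-Iwasawa assumption. The relation above together with the tautology $X\cdot\psi=X\cdot\psi$ on $\lie a$ yields $f(X)\cdot\psi=X\cdot\psi$ for every $X\in\tilde\g$. Since a Killing spinor with $\lambda\neq 0$ is nowhere vanishing (the Killing equation restricts along any curve to a linear ODE), Lemma~\ref{lemma:end_dot_spin} applies and forces $f=\id$, i.e.\ $\phi_\alpha^2=-4\epsilon_\alpha\lambda^2\id$ on $\g$.

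For the final assertion, set $c=-4\epsilon_\alpha\lambda^2\neq 0$. Applying Leibniz twice to the derivation $\phi_\alpha$ on the bracket $[x,y]$ and using $\phi_\alpha^2=c\id$ gives
\[c[x,y]=\phi_\alpha^2[x,y]=2c[x,y]+2[\phi_\alpha x,\phi_\alpha y],\]
whence $[\phi_\alpha x,\phi_\alpha y]=-\tfrac{c}{2}[x,y]$. Substituting $(\phi_\alpha x,\phi_\alpha y)$ for $(x,y)$ and using $\phi_\alpha^2=c\id$ once more yields $c^2[x,y]=\tfrac{c^2}{4}[x,y]$, forcing $[x,y]=0$. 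I expect the main obstacle to be the intrinsic calculation of $\tilde\nabla_{e_\alpha}F(\phi_\alpha v)$: one has to manage carefully the interaction between Lie-derivatives on components, the parallelism of the spinor frame along $\lie a$, and Clifford multiplication by the constant element $e_\alpha$, all of which hinge on the choice of a left-invariant frame.
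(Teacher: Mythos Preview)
Your proposal is correct and follows essentially the same strategy as the paper: differentiate~\eqref{eqn:amended2lambdasquared} along the abelian factor, exploit that the left-invariant frame is $\tilde\nabla_{e_\alpha}$-parallel, reinsert~\eqref{eqn:amended2lambdasquared} to eliminate the $F$-terms, and then invoke Lemma~\ref{lemma:end_dot_spin}. The paper carries out the differentiation along a general $e_\beta$, obtaining the slightly stronger relation $(\phi_\beta\phi_\alpha v+4g(e_\alpha,e_\beta)\lambda^2 v)\cdot\psi=0$ before specializing to $\beta=\alpha$; you go directly to $\beta=\alpha$, which suffices for this lemma. The one genuine difference is the abelianity step: the paper argues via generalized eigenspaces of the derivation $\phi_\alpha$ (the eigenvalues $\pm 2\lambda$ or $\pm 2i\lambda$ never sum to another eigenvalue, so all brackets vanish), whereas your iterated-Leibniz argument $[\phi_\alpha x,\phi_\alpha y]=-\tfrac{c}{2}[x,y]\Rightarrow c^2[x,y]=\tfrac{c^2}{4}[x,y]$ is a clean, purely algebraic alternative that avoids any appeal to complexification.
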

\begin{proof}
	Let $\psi=\sum_h a_h u_h$ be a Killing spinor with Killing constant $\lambda\neq0$.
	We can write~\eqref{eqn:amended2lambdasquared} as
	\begin{equation}
		\label{eqn:main:2lambdasquared}
		2\lambda^2 v\cdot e_\alpha\cdot \psi -\lambda \phi_\alpha v\cdot \psi +\sum_h (\Lie_{\phi_\alpha v}a_h) u_h=0.
	\end{equation}
	By Lemma~\ref{lemma:equazione8} and~\eqref{eqn:ConnectionOnSpinorNabla}, $\widetilde\nabla_{e_\beta}$ is zero on left-invariant vector fields and spinors. Thus, taking the covariant derivative with respect to $e_\beta$ of \eqref{eqn:main:2lambdasquared} yields
	\begin{equation}
		\label{eqn:main:cov}
		\begin{split}
			0 & =2\lambda^2 v\cdot e_\alpha\cdot \widetilde\nabla_{e_\beta}\psi -\lambda \phi_\alpha v\cdot \widetilde\nabla_{e_\beta}\psi +\sum_h (\Lie_{e_\beta}\Lie_{\phi_\alpha v}a_h) u_h                     \\
			  & =2\lambda^3 v\cdot e_\alpha\cdot e_\beta\cdot \psi -\lambda^2 \phi_\alpha v\cdot e_\beta\cdot\psi +\sum_h \bigl(\Lie_{[e_\beta,\phi_\alpha v]}a_h+\Lie_{\phi_\alpha v}\Lie_{e_\beta}a_h\bigr) u_h.
		\end{split}
	\end{equation}
	Again using~\eqref{eqn:ConnectionOnSpinorNabla}, we have
	\[\sum_h (\Lie_{e_\beta} a_h)u_h=\widetilde\nabla_{e_\beta}\psi=\lambda e_\beta\cdot\psi= \lambda \sum_h a_h e_\beta\cdot u_h.\]
	Since, for fixed $\beta$, $\{e_\beta\cdot u_h\}_h$ is another basis of left-invariant spinors,
	we deduce
	\[\sum_h (\Lie_{\phi_\alpha v} \Lie_{e_\beta} a_h)u_h = \lambda \sum_h  (\Lie_{\phi_\alpha v}a_h) e_\beta\cdot u_h.\]
	Thus,~\eqref{eqn:main:cov} and the definition of $\phi_\beta$ give
	\[
		0=2\lambda^3 v\cdot e_\alpha\cdot e_\beta\cdot \psi -\lambda^2 \phi_\alpha v\cdot e_\beta\cdot\psi -\sum_h (\Lie_{\phi_\beta\phi_\alpha v}a_h) u_h+\lambda e_\beta\cdot \sum_h (\Lie_{\phi_\alpha v}a_h)  u_h.
	\]
	Using~\eqref{eqn:main:2lambdasquared} to eliminate the Lie derivative terms, we obtain
	\[\begin{split}
0  &=2\lambda^3 v\cdot e_\alpha\cdot e_\beta\cdot \psi -\lambda^2 \phi_\alpha v\cdot e_\beta\cdot\psi +2\lambda^2\phi_\alpha v \cdot e_\beta\cdot\psi- \lambda\phi_\beta\phi_\alpha v\cdot \psi\\
&\qquad+\lambda e_\beta\cdot (-2\lambda^2v\cdot e_\alpha+ \lambda\phi_\alpha v)\cdot \psi                                          \\
	&=\bigl(2\lambda^3 v\cdot(e_\alpha\cdot e_\beta + e_\beta\cdot e_\alpha)  -\lambda\phi_\beta\phi_\alpha v\bigr)\cdot\psi \\
	&=(-4g(e_\alpha,e_\beta)\lambda^3 v  -\lambda\phi_\beta\phi_\alpha v)\cdot\psi.
	\end{split}\]
Recall that we are assuming  $\lambda\neq0$. Setting $\alpha=\beta$ we obtain
\[
\phi_\alpha^2(v)\cdot\psi=-4\epsilon_\alpha\lambda^2v\cdot\psi,\]
for all $v$ in $\g$. By Lemma~\ref{lemma:end_dot_spin}, $\phi_\alpha^2$ only has the eigenvalue $-4\epsilon_\alpha \lambda^2$; in particular, it is invertible. Moreover, $\phi_\alpha$ has at most two eigenvalues, either $\pm 2i\lambda$ or $\pm 2\lambda$, depending on the sign of $\epsilon_\alpha\lambda^2$.

	Now recall (see e.g. \cite[Chapter II]{Jacobson_1962}) that given a derivation and two generalized eigenspaces $\g_\lambda$, $\g_\mu$ (not necessarily distinct), either $\lambda+\mu$ is an eigenvalue and $[\g_\lambda,\g_\mu]\subseteq\g_{\lambda+\mu}$, or $[\g_\lambda,\g_\mu]=0$. In our case, the sum of two eigenvalues of $\phi_\alpha$ is not an eigenvalue. This implies that the Lie algebra $\g$ is abelian.
\end{proof}

\begin{lemma}
\label{lemma:onlyoneeigenvalue}
Let $\tilde G$ be a solvmanifold with a pseudo-Iwasawa standard decomposition $\tilde \g=\g\rtimes\lie a$. If $\tilde G$ admits a Killing spinor  with $\lambda\neq0$, then $\g$ is abelian, $\lie a=\Span{e_0}$, and $\phi_0=-\ad e_0$ has only one eigenvalue.
\end{lemma}
\begin{proof}
 	The fact that $\g$ is abelian is proved in Lemma~\ref{lemma:abelian}.  We will first show that $\dim \lie a$ must equal $1$.

	By Lemma~\ref{lemma:equazione8}, we have
	\[\widetilde\nabla_{e_\alpha} v = 0, \quad \widetilde\nabla_{e_\alpha} e_\beta=0, \quad \widetilde\nabla_w e_\alpha=\phi_\alpha(w), \quad \widetilde\nabla_w v = -\sum_\alpha g(\phi_\alpha w,v)\epsilon_\alpha e_\alpha,\]
	where as usual $v,w$ denote generic elements of $\g$.
	Let $\psi=\sum_h a_h u_h$ be a Killing spinor with Killing constant $\lambda$, so that~\eqref{eqn:ConnectionOnSpinorNabla} gives
	\[ \lambda e_\alpha\cdot\psi = \sum_h (\Lie_{e_\alpha} a_h)u_h  .\]
	For $v\in\g$, \eqref{eqn:ConnectionOnSpinorNabla} gives
	\begin{equation*}
	\begin{split}
			\lambda v\cdot\psi & = \sum_h (\Lie_v a_h)u_h - \frac14 \sum_j\epsilon_j e_j\cdot ( \sum_\alpha g(\phi_\alpha v,e_j)\epsilon_\alpha e_\alpha)\cdot \psi + \frac14  \sum_\alpha\epsilon_\alpha e_\alpha\cdot \phi_\alpha v\cdot\psi \\
			                   & = \sum_h (\Lie_v a_h)u_h - \frac14\sum_\alpha  (\sum_jg(\phi_\alpha v,e_j)\epsilon_j e_j)\cdot \epsilon_\alpha e_\alpha\cdot \psi + \frac14  \sum_\alpha\epsilon_\alpha e_\alpha\cdot \phi_\alpha v\cdot\psi  \\
			                   & = \sum_h (\Lie_v a_h)u_h - \frac14 \sum_\alpha\phi_\alpha v \cdot \epsilon_\alpha e_\alpha\cdot \psi + \frac14\sum_\alpha \epsilon_\alpha e_\alpha\cdot \phi_\alpha v\cdot\psi                                \\
			                   & = \sum_h (\Lie_v a_h)u_h - \frac12\sum_\alpha  \phi_\alpha v \cdot \epsilon_\alpha e_\alpha\cdot \psi.
		\end{split}
		\end{equation*}
		We now fix an index $\beta$ and proceed to eliminate the Lie derivatives in the last equality. Due to Lemma~\ref{lemma:abelian},  $\phi_\beta$ is an isomorphism; replacing $\alpha$ with $\beta$ and $v$ with $\phi_{\beta}^{-1}v$ in~\eqref{eqn:amended2lambdasquared}, we can write
	\begin{equation*}
	 2\lambda^2 \phi_{\beta}^{-1}(v)\cdot e_\beta\cdot\psi = \lambda v\cdot\psi - \sum_h (\Lie_{v }a_h)u_h.
	\end{equation*}
Thus,
\begin{equation}
\label{eqn:webetapsi}
 2\lambda^2 \phi_{\beta}^{-1}(v)\cdot e_\beta\cdot\psi+\frac12\sum_\alpha  \phi_\alpha v \cdot \epsilon_\alpha e_\alpha\cdot \psi =0.
\end{equation}
Now observe that if $f\colon\g\to\g$ is symmetric, then in the Clifford algebra we have
\begin{equation}
		\label{eqn:fsharpcdotpsi}
\sum_i (e^i)^\sharp \cdot f(e_i)=-\Tr(f).
\end{equation}
Indeed, if $\sum_i (e^i)^\sharp \otimes f(e_i)=\sum_{j,h}a_{jh} e_j\otimes e_h$, then $a_{jh}=a_{hj}$, so that
	\[\sum_{j,h}a_{jh}e_j\cdot e_h =\frac12 \sum_{j,h}a_{jh}(e_j\cdot e_h+e_h\cdot e_j)= \sum_j -a_{jj}\epsilon_j=-\Tr f.\]
Using \eqref{eqn:webetapsi} and \eqref{eqn:fsharpcdotpsi}, we compute
\begin{equation}
\label{eqn:allthetraces}
 \begin{split}
0&=\sum_i (e^i)^\sharp  \cdot\bigl( 2\lambda^2\phi_\beta^{-1}e_i\cdot e_\beta
			 			+ \frac12\sum_{\alpha}  \phi_\alpha e_i \cdot \epsilon_\alpha e_\alpha\bigr)\cdot \psi\\
 			&=(-2\lambda^2\Tr \phi_{\beta}^{-1}  -\frac12\epsilon_\beta \Tr\phi_\beta) e_\beta \cdot\psi   - \frac12\sum_{\alpha\neq\beta}(\Tr \phi_\alpha)\epsilon_\alpha e_\alpha\cdot\psi.
\end{split}
\end{equation}

The proof that $\dim\lie a=1$ proceeds by contradiction. Assuming that $\dim\lie a>1$, we  show that each $\phi_\alpha$ is traceless, which will enable us to compute the scalar curvature and derive a contradiction.

The first term in~\eqref{eqn:allthetraces} is zero: indeed, Lemma~\ref{lemma:abelian} implies that
$\phi_\beta^2+4\epsilon_\beta\lambda^2\id$ is nilpotent; since it commutes with $\phi_\beta^{-1}$,
\[\phi_\beta^{-1}(\phi_\beta^2+4\epsilon_\beta\lambda^2\id)=\phi_\beta +
4\epsilon_\beta\lambda^2\phi_\beta^{-1}\]
is also nilpotent and traceless. Therefore,
	\[-\sum_{\alpha\neq\beta}\Tr(\phi_\alpha)\epsilon_\alpha e_\alpha\cdot\psi=0. \]
	Setting $x=\sum_{\alpha\neq\beta}\Tr(\phi_\alpha)\epsilon_\alpha e_\alpha$, we find $0=x\cdot x\cdot \psi = -g(x,x)\psi$, hence
	\begin{equation}
		\label{eqn:sumalphaneqbeta}
		\sum_{\alpha\neq\beta} (\Tr (\phi_\alpha))^2\epsilon_\alpha=0.
	\end{equation}
	Summing over all $\beta$, we see that $\sum_\alpha (\Tr (\phi_\alpha))^2\epsilon_\alpha$ is also zero, and subtracting~\eqref{eqn:sumalphaneqbeta} we obtain $(\Tr(\phi_\beta))^2\epsilon_\beta=0$, i.e. $\Tr(\phi_\beta)=0$. Since $\beta$ is arbitrary, this implies that all $\phi_\alpha$ are traceless.

	We are now in a position to compute the scalar curvature. In the expression for $\widetilde\ric(v,w)$ in Proposition~\ref{prop:ricandrictilde}, each of the three terms vanishes: the first because $\g$ is abelian, the second because $\phi_\alpha$ is symmetric, and the last because $\Tr(\phi_\alpha)=0$. Thus,
	\[\widetilde\ric (v,w)=0, \qquad \widetilde\ric(e_\alpha,e_\beta)=- \Tr (\phi_\alpha\circ \phi_\beta),\]
	so that the scalar curvature of $\tilde G$ satisfies
	\[\tilde s=-\sum_\alpha \epsilon_\alpha \Tr((\phi_\alpha)^2)=4\lambda^2nm,\]
	where $n$ is the dimension of $\g$, $m>1$ the dimension of $\lie a$, and the trace is determined by Lemma~\ref{lemma:abelian}. By~\eqref{eqn:scalarifKilling},
	\[\tilde s=4\lambda^2 (n+m)(n+m-1),\]
	and we obtain
	\[(n+m)(n+m-1)=nm,\]
	which is absurd, because it has no solution for positive integers $m,n$.

	Thus, $\lie a$ is one-dimensional, generated by a unit vector $e_0$. Arguing as above, we obtain
	\[\widetilde\ric (v,w)=-\epsilon_0 g(\phi_0(v),w)\Tr\phi_0, \qquad \widetilde\ric(e_0,e_0)=- \Tr (\phi_0^2).\]
	Thus,
	\[4\lambda^2(n+1)n=\tilde s=-\epsilon_0(\Tr \phi_0)^2- \epsilon_0 \Tr (\phi_0^2) = -\epsilon_0(\Tr\phi_0)^2+4\lambda^2n.\]
	Up to reversing the sign of $e_0$, we can assume $\Tr \phi_0>0$, giving
	\[\Tr\phi_0 =  an,
		\qquad a=2\abs{\lambda}.
	\]
	By Lemma~\ref{lemma:abelian}, $\phi_0^2$ has only one eigenvalue, namely $\pm a^2$; hence, the eigenvalues of $\phi_0$ can only be $\pm a$, $\pm ia$; since their sum is $na$, the only possibility is that they all  equal $a$.
\end{proof}

In Section~\ref{sec:construction} we constructed a family of solvmanifolds $\tilde G^{a,k}$ endowed with Killing spinors, each determined by extending an element of a subspace $\Sigma^\pm_f\subset\Sigma$. The underlying Lie algebra has the form $\R^{p,q}\rtimes_{\phi_0}\R$, where $\phi_0$ only has the eigenvalue $a$, consistently with Lemma~\ref{lemma:onlyoneeigenvalue}. We are now ready to prove our main theorem, stating that no other possibilities occur.
\begin{theorem}
	\label{thm:main}
	Let $\tilde G$ be a solvmanifold with a pseudo-Iwasawa standard decomposition $\tilde \g=\g\rtimes\lie a$ and suppose that $\tilde G$ admits a Killing spinor. Then either:
	\begin{itemize}
	 \item $\tilde G$ is isomorphically isometric to the hyperbolic half-space $\tilde G^{a,0}$; or
	 \item $\tilde G$ is isomorphically isometric to $(\tilde G^{a,k},\tilde g_{\delta,\epsilon,\tau})$ with $a\ne0$, $k>0$, and the map from $\Sigma^\pm_f$ to the space of Killing spinors with Killing constant $\pm i\tau a/2$ defined in Proposition~\ref{prop:counterexamplestomaintheorem} is an isomorphism.
	\end{itemize}
If $\tilde G$ is Einstein, only the first case occurs.
\end{theorem}
\begin{proof}
By Lemma~\ref{lemma:onlyoneeigenvalue}, $\lie a=\Span{e_0}$, and $\phi_0$ acts with only one eigenvalue $a$; more precisely, Lemma~\ref{lemma:abelian} gives $a^2=-4\epsilon_0 \lambda^2$. It follows from Remark~\ref{remark:oneigenvalueimpliesGak} that $\tilde\g$ coincides with the Lie algebra of some  $\tilde G^{a,k}$. Since $\tilde G^{a,k}$ is simply connected, $\tilde G$ is a quotient by a central discrete subgroup; however, it is easy to check that $\tilde G^{a,k}$ has trivial center, hence $\tilde G$ is isometrically isomorphic to $\tilde G^{a,k}$ with the metric $\tilde g_{\delta,\epsilon,\tau}$.

If $\phi_0=a\id$, we obtain $\tilde G^{a,0}\cong H^\epsilon_a$, and every element of $\Sigma$ extends to a Killing spinor with Killing constant equal to $i\tau a/2$.
	Otherwise, write $\phi_0=a(\id +f)$, where $f$ is symmetric and nilpotent; necessarily we have $f^2=0$, so that
	\[\phi_0^{-1}=\frac1a(\id - f).\]
	 If $\psi$ is a Killing spinor, \eqref{eqn:webetapsi} gives
	 \[
	  \begin{split}
0&=\frac{2\lambda^2}{a} (\id - f)(v)\cdot e_0\cdot\psi + \frac {a}{2}(\id+f)(v)\cdot \epsilon_0 e_0\cdot\psi\\
&=-\frac{a\epsilon_0}{2} (\id - f)(v)\cdot e_0\cdot\psi + \frac{a}{2}(\id+f)(v)\cdot \epsilon_0 e_0\cdot\psi
=af(v)\cdot \epsilon_0 e_0\cdot\psi.\end{split}
	 \]
This implies $\im f\cdot\psi=0$. Thus, any Killing spinor $\psi$  can be written as $\psi=\sum a_h u_h$ where $\{u_h\}$ is a basis of $\Sigma_f$. Then
\eqref{eqn:ConnectionOnSpinorNabla} gives
\[
	 \lambda v\cdot \psi = \sum \Lie_v a_h u_h - \frac12\phi_0 v\cdot \epsilon_0 e_0\cdot\psi
\]
i.e.
\[	  v\cdot (\lambda+\frac a2\epsilon_0 e_0)\psi = \sum \Lie_v a_h u_h.\]
The right hand side is in $\Sigma_f$; for any fixed $w$ in $\R^{p,q}$ we obtain
\[f(w)\cdot v\cdot (\lambda+\frac a2\epsilon_0 e_0)\cdot\psi =0.\]
As $v\cdot f(w)\cdot (\lambda+\frac a2\epsilon_0 e_0)\cdot\psi$ is zero, this implies
\[2g(v,f(w))(\lambda+\frac a2\epsilon_0 e_0)\cdot\psi =0\]
for all $v,w$; since $f$ is nonzero, $\psi=-\frac {\epsilon_0 a}{2\lambda}e_0\cdot\psi=\pm i \tau  e_0\cdot\psi$, so the evaluation of $\psi$ at the identity lies in $\Sigma^\pm_f$.

Since Killing spinors with Killing constant $\lambda$ are parallel relative to the modified connection $\Xi_X\psi=\widetilde\nabla_X\psi- \lambda X\cdot\psi$, they are determined by their value at a point. Thus, every Killing spinor is obtained with the construction of Proposition~\ref{prop:counterexamplestomaintheorem}.
\end{proof}

As an application in the Riemannian case, we obtain an alternative proof of Corollary~\ref{cor:conbaum}. We repeat the statement for convenience.
\begin{corollary*}
	\label{cor:baumless}
	The only homogeneous Riemannian manifold carrying a Killing spinor with $\lambda$  imaginary is hyperbolic space.
\end{corollary*}
\begin{proof}
	As a consequence of~\eqref{eqn:RicciPsi}, the metric is Einstein of scalar curvature $s=4n(n-1)\lambda^2<0$. By the results of Heber, Lauret, B\"ohm and Lafuente that we recalled in Section~\ref{sec:iwasawa} (Remark~\ref{remark:RiemannianSolv}), we have a solvmanifold with a standard decomposition of pseudo-Iwasawa type. Thus, Theorem~\ref{thm:main} applies.
\end{proof}
In retrospect, we can conclude that the decomposition of a homogeneous Riemannian manifold carrying a Killing spinor with $\lambda$ nonzero as a warped product $F\times\R$ coincides with the decomposition as a semidirect product $G\rtimes\R$. If the two decompositions are known to coincide, then $F=G$ is a Ricci-flat Lie group, which immediately shows that it isometric to $\R^n$. The proof of Section~\ref{sec:homogeneous} amounts to showing that $F$ is a Lie group with a left-invariant metric; dually, the proof given above implies that the Killing spinor restricts to a parallel spinor on $G$.
\printbibliography

\medskip
\small\noindent D. Conti: Dipartimento di Matematica, Università di Pisa, largo Bruno Pontecorvo 6, 56127 Pisa, Italy.\\
\texttt{diego.conti@unipi.it}\\
\small\noindent F. A. Rossi: Dipartimento di Matematica e Informatica, Universit\`a degli studi di Perugia, via Vanvitelli 1, 06123 Perugia, Italy.\\
\texttt{federicoalberto.rossi@unipg.it}\\
\small\noindent R.~Segnan Dalmasso: Dipartimento di Matematica e Applicazioni, Universit\`a di Milano Bicocca, via Cozzi 55, 20125 Milano, Italy.\\
\texttt{romeo.segnandalmasso@unimib.it}

\end{document}